\providecommand{\tabularnewline}{\\}
\numberwithin{equation}{section}
\numberwithin{figure}{section}
  \theoremstyle{definition}
  \newtheorem{defn}{\protect\definitionname}
  \theoremstyle{remark}
  \newtheorem{rem}{\protect\remarkname}
 \theoremstyle{definition}
  \newtheorem{example}{\protect\examplename}
  \theoremstyle{plain}
  \newtheorem{fact}{\protect\factname}
  \theoremstyle{plain}
  \newtheorem{lem}{\protect\lemmaname}
\theoremstyle{plain}
\newtheorem{thm}{\protect\theoremname}
\newenvironment{lyxlist}[1]
{\begin{list}{}
{\settowidth{\labelwidth}{#1}
 \setlength{\leftmargin}{\labelwidth}
 \addtolength{\leftmargin}{\labelsep}
 }}
{\end{list}}
  \theoremstyle{plain}
  \newtheorem{prop}{\protect\propositionname}
  \theoremstyle{plain}
  \newtheorem{cor}{\protect\corollaryname}
  \providecommand{\definitionname}{Definition}
  \providecommand{\examplename}{Example}
  \providecommand{\factname}{Fact}
  \providecommand{\lemmaname}{Lemma}
  \providecommand{\propositionname}{Proposition}
  \providecommand{\remarkname}{Remark}
\providecommand{\corollaryname}{Corollary}
\providecommand{\theoremname}{Theorem}
\begin{document}

\title[Poloids]{Poloids from the Points of View of Partial Transformations and Category
Theory}

\author{Dan Jonsson}

\address{Dan Jonsson, Department of Sociology, University of Gothenburg, Sweden.}

\email{dan.jonsson@gu.se}
\begin{abstract}
Monoids and groupoids are examples of poloids. On the one hand, poloids
can be regarded as one-sorted categories; on the other hand, poloids
can be represented by partial magmas of partial transformations. In
this article, poloids are considered from these two points of view.
\end{abstract}

\maketitle

\section{Introduction}

While category theory is, in a sense, a mathematical theory of mathematics,
there does also exist a mathematical (algebraic) theory of (small)
categories. The phrase ``categories are just monoidoids'' summarizes
this theory in a somewhat cryptic manner. One part of this article
is concerned with clarifying this statement, systematically developing
definitions of category-related algebraic concepts such as semigroupoids,
poloids and groupoids, and deriving results that we recognize from
category theory. While no new results are presented, the underlying
notion that (small) categories are ``just webs of monoids'' \textendash{}
or partial magmas generalizing monoids, semigroups, groups etc. \textendash{}
may deserve more systematic attention than it has received.

The other part of the article deals with the link between abstract
algebraic structures such as poloids and concrete systems of partial
transformations on some set. We obtain systems of partial transformations
that satisfy the axioms of poloids as abstract algebraic structures
by successively adding constraints on partial magmas of partial transformations;
it is also shown that every poloid is isomorphic to such a system
of partial transformations. This procedure provides an intuitive interpretation
of the poloid axioms, helping to motivate the axioms and making it
easier to discover important concepts and results. As is known, this
approach has shown its usefulness in the study of semigroups, for
example, in the work of Wagner \cite{key-9}.

At a late stage in the preparation of the manuscript, I became aware
of related work on constellations \cite{key-4,key-5,key-7}. Constellations
turned out to generalize poloids in a way that I had not considered,
yet had several points of contact with my concepts and results. I
have added an Appendix where these matters are discussed.

\section{Poloids and transformation magmas}

\subsection{(Pre)functions, (pre)transformations and magmas}
\begin{defn}
\label{def1a} A \emph{(partial) prefunction}, $\mathsf{f}:X\nRightarrow Y$
is a set $\mathfrak{X}\subseteq X$ and a rule $\overline{\mathsf{f}}$
that assigns exactly one $\overline{\mathsf{f}}\!\left(x\right)\in Y$
to each $x\in\mathcal{\mathfrak{X}}$; to simplify the notation we
may write $\overline{\mathsf{f}}\!\left(x\right)$ as $\mathsf{f}\!\left(x\right)$.
We call $\mathfrak{X}$ the \emph{domain} of $\mathsf{f}$, denoted
$\mathrm{dom}\!\left(\mathsf{f}\right)$. The \emph{image} of $\mathsf{f}$,
denoted $\mathrm{im}\!\left(\mathsf{f}\right)$, is the set $\mathsf{f}\!\left(\mathrm{dom}\!\left(\mathsf{f}\right)\right)=\left\{ \mathsf{f}\!\left(x\right)\mid x\in\mathcal{\mathfrak{X}}\right\} $. 
\end{defn}
\begin{defn}
\label{def1}A \emph{(partial) function} $f:X\nrightarrow Y$ is a
prefunction $\mathsf{f}:X\nRightarrow Y$ and a set $\mathfrak{Y}$
such that $\mathrm{im}\!\left(\mathsf{f}\right)\subseteq\mathfrak{Y}\subseteq Y$.
The \emph{domain} of $f$, denoted $\mathrm{dom}\!\left(f\right)$,
is the domain of $\mathsf{f}$, and $\mathfrak{Y}$ is called the
\emph{codomain} of $f$, denoted $\mathrm{cod}\!\left(f\right)$.
The \emph{image} of $f$, denoted $\mathrm{im}\!\left(f\right)$,
is defined to be the image of $\mathsf{f}$. 
\end{defn}

Although this terminology will not be used below, $X$ may be called
the \emph{total domain} for $\mathsf{f}:X\nRightarrow Y$ or $f:X\nrightarrow Y$,
and $Y$ may be called the \emph{total codomain} for $\mathsf{f}:X\nRightarrow Y$
or $f:X\nrightarrow Y$.

A \emph{total} \emph{prefunction} $\mathsf{f}:X\Rightarrow Y$ is
a prefunction such that $\mathrm{dom}\!\left(\mathsf{f}\right)=X$.
A \emph{non-empty} \emph{prefunction} $\mathsf{f}$ is a prefunction
such that $\mathrm{dom}\!\left(\mathsf{f}\right)=\emptyset$. The
\emph{restriction} of $\mathsf{f}:X\nRightarrow Y$ to $X'\subset X$
is the prefunction $\mathfrak{\mathsf{f}}\raise-.2ex\hbox{\ensuremath{|}}_{X'}:X'\nRightarrow Y$
such that $\mathrm{dom}\!\left(\mathsf{\mathsf{f}}\raise-.2ex\hbox{\ensuremath{|}}_{X'}\right)=\mathrm{dom}\!\left(\mathsf{f}\right)\cap X'$
and $\mathsf{f}\raise-.2ex\hbox{\ensuremath{|}}_{X'}\!\left(x\right)=\mathsf{f}\!\left(x\right)$
for all $x\in\mathrm{dom}\!\left(\mathsf{f}\raise-.2ex\hbox{\ensuremath{|}}_{X'}\right)$.
A\emph{ pretransformation} on $X$ is a prefunction $\mathsf{f}:X\nRightarrow X$;
a \emph{total} pretransformation on $X$ is a total prefunction $\mathsf{f}:X\Rightarrow X$.
An \emph{identity pretransformation} $\mathsf{Id}_{S}$ is a pretransformation
on $X\supseteq S$ such that $\mathrm{dom}\!\left(\mathsf{Id}_{S}\right)=S$
and $\mathsf{Id}_{S}\!\left(x\right)=x$ for all $x\in\mathrm{dom}\!\left(\mathsf{Id}_{S}\right)$. 

Similarly, a \emph{total} \emph{function} $f:X\rightarrow Y$ is a
function such that $\mathrm{dom}\!\left(f\right)=X$ and $\mathrm{cod}\!\left(f\right)=Y$.
A \emph{non-empty function} $f$ is a function such that $\mathrm{dom}\!\left(f\right)~\neq~\emptyset$.
The \emph{restriction} of $f:X\nrightarrow Y$ to $X'\subset X$ is
the  function $f\raise-.2ex\hbox{\ensuremath{|}}_{X'}:X'\nrightarrow Y$
such that $\mathrm{dom}\!\left(f\raise-.2ex\hbox{\ensuremath{|}}_{X'}\right)=\mathrm{dom}\!\left(f\right)\cap X'$,
$\mathrm{cod}\!\left(f\raise-.2ex\hbox{\ensuremath{|}}_{X'}\right)=\mathrm{cod}\!\left(f\right)$
and $f\raise-.2ex\hbox{\ensuremath{|}}_{X'}\!\left(x\right)=f\!\left(x\right)$
for all $x\in\mathrm{dom}\!\left(f\raise-.2ex\hbox{\ensuremath{|}}_{X'}\right)$.
A\emph{ transformation} on $X$ is a function $f:X\nrightarrow X$;
a \emph{total} transformation on $X$ is a total function $f:X\rightarrow X$.
An \emph{identity transformation} $I\!d_{S}$ is a transformation
on $X\supseteq S$ such that $\mathrm{dom}\!\left(I\!d_{S}\right)=\mathrm{cod}\!\left(I\!d_{S}\right)=S$
and $I\!d_{S}\!\left(x\right)=x$ for all $x\in\mathrm{dom}\!\left(I\!d_{S}\right)$. 

Given a pretransformation $\mathsf{f}$ on $X$, $\mathsf{f}\!\left(x\right)$
denotes some $x\in X$ if and only if $x\in\mathrm{dom}\!\left(\mathsf{f}\right)$;
$\mathsf{f}\!\left(\mathsf{f}\!\left(x\right)\right)$ denotes some
$x\in X$ if and only if $x,\mathsf{\mathsf{f}}\!\left(x\right)\in\mathrm{dom}\!\left(\mathsf{f}\right)$;
etc. We describe such situations by saying that $\mathsf{f}\!\left(x\right)$,
$\mathsf{f}\!\left(\mathsf{f}\!\left(x\right)\right)$, etc. are \emph{defined}.
Similarly, given a transformation $f$ on $X$, $f\!\left(x\right)$,
$f\!\left(f\!\left(x\right)\right)$, etc. are said to be defined
if the corresponding pretransformations $\mathsf{f}\!\left(x\right)$,
$\mathsf{f}\!\left(\mathsf{f}\!\left(x\right)\right)$, etc. are defined\emph{.}
\begin{defn}
\label{def2}A \emph{(partial) binary operation} on a set $X$ is
a non-empty prefunction
\[
\uppi:X\times X\nRightarrow X,\qquad\left(x,y\right)\mapsto xy.
\]
\end{defn}
A \emph{total} binary operation on $X$ is a total prefunction $\uppi:X\times X\Rightarrow X$.
A \emph{(partial) magma} $P$ is a non-empty set $\left|P\right|$
equipped with a binary operation on $\left|P\right|$; a \emph{total}
magma $P$ is a non-empty set $\left|P\right|$ equipped with a total
binary operation on $\left|P\right|$. A \emph{submagma} $P'$ of
a magma $P$ is a set $\left|P'\right|\subseteq\left|P\right|$ such
that if $x,y\in\left|P'\right|$ then $xy\in\left|P'\right|$, with
the restriction of $\uppi$ to $\left|P'\right|\times\left|P'\right|$
as a binary operation. (By an abuse of notation, $P$ will also denote
the set $\left|P\right|$ henceforth.)

The notion of being defined for expressions involving a pretransformation
can be extended in a natural way to expressions involving a binary
operation. We say that $xy$ is defined if and only if $\left(x,y\right)\in\mathrm{dom}\!\left(\uppi\right)$;
that $\left(xy\right)\!z$ is defined if and only if $\left(x,y\right),\left(xy,z\right)\in\mathrm{dom}\!\left(\uppi\right)$;
that $z\!\left(xy\right)$ is defined if and only if $\left(x,y\right),\left(z,xy\right)\in\mathrm{dom}\!\left(\uppi\right)$;
and so on. Thus, if $\left(xy\right)\!z$ or $z\!\left(xy\right)$
is defined then $xy$ is defined.
\begin{rem}
{\small{}To avoid tedious repetition of the word ``partial'', we
speak about (pre)functions and magmas as opposed to total (pre)functions
and total magmas rather than partial (pre)functions and partial magmas
as opposed to (pre)functions and magmas. Note that a binary operation
$\uppi:P\times P\nRightarrow P$ can always be regarded as a total
binary operation $\uppi^{0}:P^{0}\times P^{0}\Rightarrow P^{0}$,
where $P^{0}=P\cup\left\{ 0\right\} $ and $0x=x0=0$ for each $x\in P$,
considering $xy$ to be defined if and only if $xy\neq0$. If we let
$P^{0}$ represent $P$ in this way, it becomes a theorem that if
$\left(xy\right)\!z$ or $z\!\left(xy\right)$ is defined then $xy$
is defined.}{\small \par}
\end{rem}

\subsection{Semigroupoids, poloids and groupoids }

We say that $x$ \emph{precedes} $y$, denoted $x\prec y$, if and
only if $xy$ or $x\!\left(yz\right)$ or $\left(zx\right)\!y$ is
defined, and we write $x\prec y\prec z$ if and only if $x\prec y$
and $y\prec z$, meaning that $xy$ and $yz$ are defined or $x\!\left(yz\right)$
is defined or $\left(xy\right)\!z$ is defined.
\begin{defn}
\label{def3}A \emph{semigroupoid} is a magma $P$ such that for any
\linebreak{}
 $x\prec y\prec z\in P$, $\left(xy\right)\!z$ and $x\!\left(yz\right)$
are defined and $\left(xy\right)\!z=x\!\left(yz\right)$.
\end{defn}
A\emph{ unit} in a magma $P$ is any $e\in P$ such that $ex=x$ for
all $x$ such that $ex$ is defined and $xe=x$ for all $x$ such
that $xe$ is defined. 
\begin{defn}
\label{def4}A \emph{poloid} is a semigroupoid $P$ such that for
any $x\in P$ there are units $\epsilon_{\!x},\varepsilon_{\!x}\in P$
such that $\epsilon_{x}x$ and $x\varepsilon_{\!x}$ are defined.
\end{defn}
For any $x\in P$, we have $\epsilon_{x}x=x=x\varepsilon_{x}$ since
$\epsilon_{x}$ and $\varepsilon_{x}$ are units; we may call $\epsilon_{x}$
an \emph{effective left unit} for $x$ and $\varepsilon_{x}$ an \emph{effective
right unit} for $x$.
\begin{defn}
\label{def5}A \emph{groupoid} is a poloid $P$ such that for every
$x\in P$ there is a unique $x^{-1}\in P$ such that $xx^{-1}$ and
$x^{-1}x$ are defined and units.
\end{defn}
\begin{rem}
{\small{}Recall that groups, monoids and semigroups are total magmas
with additional properties. Each kind of total magma can be generalized
to a (partial) magma with similar properties, sometimes named by adding
the ending ``-oid'', as in group/groupoid and semigroup/semigroupoid,
so that the process of generalizing to a not necessarily total magma
has become known as ``oidification''. (See the table below.) However,
the terminology is not consistent \textendash{} for example, a monoid
is not a (partial) magma. I prefer ``poloid'' to the rather clumsy
and confusing term ``monoidoid'', which suggests some kind of ``double
oidification''. An important concept should have a short name, and
the idea behind the current terminology is that a monoid has a single
unit, whereas a poloid may have more than one unit. \bigskip{}
}{\small \par}

{\small{}}%
\begin{tabular}{ll}
\hline 
{\small{}total magma (magma, groupoid)} & {\small{}magma (partial magma, halfgroupoid)}\tabularnewline
{\small{}semigroup} & {\small{}semigroupoid}\tabularnewline
{\small{}monoid} & {\small{}poloid (monoidoid)}\tabularnewline
{\small{}group} & {\small{}groupoid}\tabularnewline
\hline 
\end{tabular}{\small \par}

\medskip{}
\end{rem}
{\small{}It should be kept in mind that semigroups, monoids and groups
can be generalized to other (partial) magmas than semigroupoids, poloids
and groupoids, respectively. For example, if we do not require that
if $x\prec y\prec z$ then $x\!\left(yz\right)$ and $\left(xy\right)\!z$
are defined and equal but only that if $x\!\left(yz\right)$ or $\left(xy\right)\!z$
is defined then $x\!\left(yz\right)$ and $\left(xy\right)\!z$ are
defined and equal, we obtain a semigroup generalized to a certain
(partial) magma but this is not a semigroupoid as defined here. The
specific definitions given in this section are suggested by category
theory.}{\small \par}

\subsection{(Pre)transformation magmas}

Recall that the \emph{full transformation monoid} $\overline{\mathcal{F}}{}_{\!X}$
on a non-empty set $X$ is the set $\overline{\mathcal{F}}{}_{\!X}$
of all total functions $f:X\rightarrow X$, equipped with the total
binary operation
\[
\circ:\overline{\mathcal{F}}{}_{\!X}\times\overline{\mathcal{F}}{}_{\!X}\Rightarrow\overline{\mathcal{F}}{}_{\!X},\qquad\left(f,g\right)\mapsto f\circ g,
\]
where $\left(f\circ g\right)\!\left(x\right)=f\!\left(g\!\left(x\right)\right)$
for all $x\in X$. More generally, a \emph{transformation semigroup}
$\mathcal{F}{}_{\!X}$ is a set of total functions $f:X\rightarrow X$
with $\circ$ as binary operation and such that $f,g\in\mathcal{F}{}_{\!X}$
implies $\mathsf{f}\circ\mathsf{g}\in\mathcal{F}{}_{\!X}$, and a
\emph{transformation monoid} $\mathcal{M}_{\!X}$ is a transformation
semigroup such that $I\!d{}_{\!X}\in\mathcal{M}{}_{\!X}$.
\begin{example}
\label{exa1}Set $X=\left\{ 1,2\right\} $, let $e:X\rightarrow X$
be defined by $e\!\left(1\right)=e\!\left(2\right)=1$ and let $M_{X}$
be the magma with $\left\{ e\right\} $ as underlying set and function
composition $\circ$ as binary operation. Then $M_{X}$ is a (trivial)
\emph{monoid of transformations}, but it is not a \emph{transformation
monoid}.
\end{example}
When we generalize from total functions $X\rightarrow X$ to functions
$X\nrightarrow X$ or prefunctions $X\nRightarrow X$, $\mathcal{F}{}_{\!X}$
is generalized from a transformation semigroup to a transformation
magma $\mathscr{F}_{\!X}$ or a pretransformation magma $\mathscr{R}_{\!X}$.
\begin{defn}
\label{def6a}Let $X$ be a non-empty set. A\emph{ pretransformation
magma} $\mathscr{R}_{\!X}$ on $X$ is a set $\mathscr{R}_{X}$ of
non-empty pretransformations $\mathsf{f}:X\nRightarrow X$, equipped
with the binary operation
\[
\circ:\mathscr{R}_{\!X}\times\mathscr{R}_{\!X}\nRightarrow\mathscr{R}_{\!X},\qquad\left(\mathsf{f},\mathsf{g}\right)\mapsto\mathsf{f}\circ\mathsf{g},
\]
where $\mathrm{dom}\!\left(\circ\right)=\left\{ \left(\mathsf{f},\mathsf{g}\right)\mid\mathrm{dom}\!\left(\mathsf{f}\right)\supseteq\mathsf{im}\!\left(\mathsf{g}\right)\right\} $
and $\mathsf{f}\circ\mathsf{g}$ if defined is given by $\mathsf{\mathrm{dom}}\!\left(\mathsf{f}\circ\mathsf{g}\right)=\mathrm{dom}\!\left(\mathsf{g}\right)$
and $\left(\mathsf{f}\circ\mathsf{g}\right)\!\left(x\right)=\mathsf{f}\!\left(\mathsf{g}\!\left(x\right)\right)$
for all $x\!\in\!\mathrm{dom}\!\left(\mathsf{f}\circ\mathsf{g}\right)$. 
\end{defn}
The \emph{full} pretransformation magma on $X$, denoted $\overline{\mathscr{R}}_{\!X}$,
is the pretransformation magma whose underlying set is the set of
all non-empty pretransformations of the form $\mathsf{f}:X\nRightarrow X$.
\begin{defn}
\label{def6}Let $X$ be a non-empty set. A\emph{ transformation magma}
$\mathscr{F}_{\!X}$ on $X$ is a set $\mathscr{F}_{X}$ of non-empty
transformations $f:X\nrightarrow X$, equipped with the binary operation
\[
\circ:\mathscr{F}_{\!X}\times\mathscr{F}_{\!X}\nRightarrow\mathscr{F}_{\!X},\qquad\left(f,g\right)\mapsto f\circ g,
\]
where $\mathrm{dom}\!\left(\circ\right)=\left\{ \left(f,g\right)\mid\mathrm{dom}\!\left(f\right)\supseteq\mathsf{im}\!\left(g\right)\right\} $
and $f\circ g$ if defined is given by $\mathsf{\mathrm{dom}}\!\left(f\circ g\right)=\mathrm{dom}\!\left(g\right)$,
$\mathrm{cod}\!\left(f\circ g\right)=\mathrm{cod}\!\left(f\right)$
and $\left(f\circ g\right)\!\left(x\right)=f\!\left(g\!\left(x\right)\right)$
for all $x\!\in\!\mathrm{dom}\!\left(f\circ g\right)$. 
\end{defn}
The \emph{full} transformation magma on $X$, denoted $\mathscr{\overline{F}}_{\!X}$,
is the transformation magma whose underlying set is the set of all
non-empty  transformations $\mathsf{f}:X\nrightarrow X$.

A (pre)transformation magma is clearly a magma as described in Definition
\ref{def2}.

The plan in this section, derived from the view that categories are
``webs of monoids'', is to construct transformation magmas that
relate to poloids in the same way that transformation monoids relate
to monoids. As a monoid is an \emph{associative} magma with a \emph{unit},
we look for appropriate generalizations of these two notions.
\begin{fact}
\label{f1}Let $\mathsf{f},\mathsf{g},\mathsf{h}$ be elements of
a pretransformation magma.\emph{ }If $\left(\mathsf{f}\circ\mathsf{g}\right)\circ\mathsf{h}$
and $\mathsf{f}\circ\left(\mathsf{g}\circ\mathsf{h}\right)$ are defined
then $\left(\mathsf{f}\circ\mathsf{g}\right)\circ\mathsf{h}=\mathsf{f}\circ\left(\mathsf{g}\circ\mathsf{h}\right)$.
\end{fact}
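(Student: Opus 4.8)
The plan is to unwind Definition~\ref{def6a} on both sides of the claimed identity and check that the two pretransformations $(\mathsf{f}\circ\mathsf{g})\circ\mathsf{h}$ and $\mathsf{f}\circ(\mathsf{g}\circ\mathsf{h})$ have the same domain and agree at every point of that domain; equality of pretransformations then follows.

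First I would extract the relevant definedness conditions. Since $(\mathsf{f}\circ\mathsf{g})\circ\mathsf{h}$ is defined, both $\mathsf{f}\circ\mathsf{g}$ and $(\mathsf{f}\circ\mathsf{g})\circ\mathsf{h}$ are defined, which by Definition~\ref{def6a} means $\mathrm{dom}(\mathsf{f})\supseteq\mathrm{im}(\mathsf{g})$ and $\mathrm{dom}(\mathsf{f}\circ\mathsf{g})=\mathrm{dom}(\mathsf{g})\supseteq\mathrm{im}(\mathsf{h})$. Similarly, since $\mathsf{f}\circ(\mathsf{g}\circ\mathsf{h})$ is defined, both $\mathsf{g}\circ\mathsf{h}$ and $\mathsf{f}\circ(\mathsf{g}\circ\mathsf{h})$ are defined, so $\mathrm{dom}(\mathsf{g})\supseteq\mathrm{im}(\mathsf{h})$ and $\mathrm{dom}(\mathsf{f})\supseteq\mathrm{im}(\mathsf{g}\circ\mathsf{h})$. (One of the two hypotheses in fact forces the other, but I do not need that observation here, since the Fact already assumes both composites are defined.)

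Next I would compute the two domains and the two actions. By Definition~\ref{def6a}, $\mathrm{dom}\big((\mathsf{f}\circ\mathsf{g})\circ\mathsf{h}\big)=\mathrm{dom}(\mathsf{h})$ and $\mathrm{dom}\big(\mathsf{f}\circ(\mathsf{g}\circ\mathsf{h})\big)=\mathrm{dom}(\mathsf{g}\circ\mathsf{h})=\mathrm{dom}(\mathsf{h})$, so the domains coincide. For an arbitrary $x\in\mathrm{dom}(\mathsf{h})$ I would first check that the nested expression $\mathsf{f}(\mathsf{g}(\mathsf{h}(x)))$ is defined: $\mathsf{h}(x)\in\mathrm{im}(\mathsf{h})\subseteq\mathrm{dom}(\mathsf{g})$, hence $\mathsf{g}(\mathsf{h}(x))$ is defined, and then $\mathsf{g}(\mathsf{h}(x))\in\mathrm{im}(\mathsf{g})\subseteq\mathrm{dom}(\mathsf{f})$, hence $\mathsf{f}(\mathsf{g}(\mathsf{h}(x)))$ is defined. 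Applying the formula for $\circ$ twice on each side then gives $\big((\mathsf{f}\circ\mathsf{g})\circ\mathsf{h}\big)(x)=(\mathsf{f}\circ\mathsf{g})(\mathsf{h}(x))=\mathsf{f}(\mathsf{g}(\mathsf{h}(x)))$ and $\big(\mathsf{f}\circ(\mathsf{g}\circ\mathsf{h})\big)(x)=\mathsf{f}\big((\mathsf{g}\circ\mathsf{h})(x)\big)=\mathsf{f}(\mathsf{g}(\mathsf{h}(x)))$, which are the same element of $X$.

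Having shown that the two pretransformations have identical domains and identical values throughout, I conclude they are equal, which is the assertion of the Fact. I do not expect a genuine obstacle; the only step calling for slight care is verifying that $\mathsf{f}(\mathsf{g}(\mathsf{h}(x)))$ really is defined for each $x\in\mathrm{dom}(\mathsf{h})$, and this is delivered precisely by the image–domain inclusions recorded in the first step.
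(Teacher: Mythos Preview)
Your proof is correct and follows essentially the same approach as the paper: you show the two composites share the domain $\mathrm{dom}(\mathsf{h})$ and then verify pointwise that both evaluate to $\mathsf{f}(\mathsf{g}(\mathsf{h}(x)))$. Your version is slightly more detailed in explicitly checking that $\mathsf{f}(\mathsf{g}(\mathsf{h}(x)))$ is defined, but the argument is the same.
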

\begin{proof}
We have 
\begin{gather*}
\mathrm{dom}\!\left(\left(\mathsf{f}\circ\mathsf{g}\right)\circ\mathsf{h}\right)=\mathrm{dom}\!\left(\mathsf{h}\right)=\mathrm{dom}\!\left(\mathsf{g}\circ\mathsf{h}\right)=\mathrm{dom}\!\left(\mathsf{f}\circ\left(\mathsf{g}\circ\mathsf{h}\right)\right),
\end{gather*}
and 
\[
\left(\left(\mathsf{f}\circ\mathsf{g}\right)\circ\mathsf{h}\right)\!\left(x\right)=\left(\mathsf{f}\circ\mathsf{g}\right)\left(\mathsf{h}\!\left(x\right)\right)=\mathsf{f}\!\left(\mathsf{g}\!\left(\mathsf{h}\!\left(x\right)\right)\right)=\mathsf{f}\!\left(\left(\mathsf{g}\circ\mathsf{h}\right)\!\left(x\right)\right)=\left(\mathsf{f}\circ\left(\mathsf{g}\circ\mathsf{h}\right)\right)\!\left(x\right)
\]
for all $x\in\mathrm{dom}\!\left(\left(\mathsf{f}\circ\mathsf{g}\right)\circ\mathsf{h}\right)=\mathrm{dom}\!\left(\mathsf{f}\circ\left(\mathsf{g}\circ\mathsf{h}\right)\right)$.
\end{proof}
\begin{lem}
\label{lem1}Let $\mathsf{f,g}$ be elements of a pretransformation
magma. If $\mathsf{f}\circ\mathsf{g}$ is defined then $\mathsf{im}\!\left(\mathsf{f}\right)\supseteq\mathsf{im}\!\left(\mathsf{f}\circ\mathsf{g}\right)$.
\end{lem}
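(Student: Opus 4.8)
The plan is simply to unwind the definition of $\circ$ in a pretransformation magma (Definition \ref{def6a}). Since $\mathsf{f}\circ\mathsf{g}$ is defined, we have $\left(\mathsf{f},\mathsf{g}\right)\in\mathrm{dom}\!\left(\circ\right)$, which means $\mathrm{dom}\!\left(\mathsf{f}\right)\supseteq\mathsf{im}\!\left(\mathsf{g}\right)$; moreover $\mathrm{dom}\!\left(\mathsf{f}\circ\mathsf{g}\right)=\mathrm{dom}\!\left(\mathsf{g}\right)$ and $\left(\mathsf{f}\circ\mathsf{g}\right)\!\left(x\right)=\mathsf{f}\!\left(\mathsf{g}\!\left(x\right)\right)$ for all $x\in\mathrm{dom}\!\left(\mathsf{g}\right)$. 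These three facts are everything that is needed.

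First I would write out $\mathsf{im}\!\left(\mathsf{f}\circ\mathsf{g}\right)=\left\{ \left(\mathsf{f}\circ\mathsf{g}\right)\!\left(x\right)\mid x\in\mathrm{dom}\!\left(\mathsf{f}\circ\mathsf{g}\right)\right\} =\left\{ \mathsf{f}\!\left(\mathsf{g}\!\left(x\right)\right)\mid x\in\mathrm{dom}\!\left(\mathsf{g}\right)\right\}$. Then, for an arbitrary $x\in\mathrm{dom}\!\left(\mathsf{g}\right)$, the element $\mathsf{g}\!\left(x\right)$ lies in $\mathsf{im}\!\left(\mathsf{g}\right)$, hence in $\mathrm{dom}\!\left(\mathsf{f}\right)$ by the containment above; consequently $\mathsf{f}\!\left(\mathsf{g}\!\left(x\right)\right)$ is defined and is by definition a member of $\mathsf{im}\!\left(\mathsf{f}\right)=\left\{ \mathsf{f}\!\left(y\right)\mid y\in\mathrm{dom}\!\left(\mathsf{f}\right)\right\}$. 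Since $x$ was arbitrary, every element of $\mathsf{im}\!\left(\mathsf{f}\circ\mathsf{g}\right)$ belongs to $\mathsf{im}\!\left(\mathsf{f}\right)$, which is exactly the asserted inclusion $\mathsf{im}\!\left(\mathsf{f}\right)\supseteq\mathsf{im}\!\left(\mathsf{f}\circ\mathsf{g}\right)$.

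There is no real obstacle here; the only point requiring a moment's care is the bookkeeping of domains — one must check that $\mathsf{g}\!\left(x\right)$ genuinely lies in $\mathrm{dom}\!\left(\mathsf{f}\right)$ so that $\mathsf{f}\!\left(\mathsf{g}\!\left(x\right)\right)$ counts as a legitimate value of $\mathsf{f}$ — but this is precisely what the defining condition on $\mathrm{dom}\!\left(\circ\right)$ supplies. I note that the same argument applies verbatim to a transformation magma (Definition \ref{def6}), since the codomains play no role in it.
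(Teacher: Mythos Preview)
Your proof is correct and follows essentially the same route as the paper's: both use that $\mathrm{dom}\!\left(\mathsf{f}\right)\supseteq\mathsf{im}\!\left(\mathsf{g}\right)$ to show every value of $\mathsf{f}\circ\mathsf{g}$ is already a value of $\mathsf{f}$. The only cosmetic difference is that the paper compresses your elementwise argument into a single chain $\mathsf{im}\!\left(\mathsf{f}\right)=\mathsf{f}\!\left(\mathrm{dom}\!\left(\mathsf{f}\right)\right)\supseteq\mathsf{f}\!\left(\mathsf{im}\!\left(\mathsf{g}\right)\right)=\mathsf{f}\!\left(\mathsf{g}\!\left(\mathrm{dom}\!\left(\mathsf{g}\right)\right)\right)=\left(\mathsf{f}\circ\mathsf{g}\right)\!\left(\mathrm{dom}\!\left(\mathsf{f}\circ\mathsf{g}\right)\right)=\mathsf{im}\!\left(\mathsf{f}\circ\mathsf{g}\right)$ using set-image notation.
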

\begin{proof}
Since $\mathrm{dom}\!\left(\mathsf{f}\right)\supseteq\mathsf{im}\!\left(\mathsf{g}\right)$
by definition, we have $\mathsf{im}\!\left(\mathsf{f}\right)=\mathsf{f}\!\left(\mathrm{dom}\!\left(\mathsf{f}\right)\right)\supseteq\mathsf{f}\!\left(\mathsf{im}\!\left(\mathsf{g}\right)\right)=\mathsf{f}\!\left(\mathsf{g}\!\left(\mathrm{dom}\!\left(\mathsf{g}\right)\right)\right)=\left(\mathsf{f}\circ\mathsf{g}\right)\left(\mathrm{dom}\!\left(\mathsf{f}\circ\mathsf{g}\right)\right)=\mathsf{im}\!\left(\mathsf{f}\circ\mathsf{g}\right)$.
\end{proof}
\begin{fact}
\label{f2}Let $\mathsf{f},\mathsf{g},\mathsf{h}$ be elements of
a pretransformation magma. If $\mathsf{f}\circ\mathsf{g}$ and $\mathsf{g}\circ\mathsf{h}$
are defined then $\left(\mathsf{f}\circ\mathsf{g}\right)\circ\mathsf{h}$
and $\mathsf{f}\circ\left(\mathsf{g}\circ\mathsf{h}\right)$ are defined.
\end{fact}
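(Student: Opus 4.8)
The plan is to unwind the definition of ``defined'' for $\circ$ and then simply chase domains and images, the only non-trivial ingredient being Lemma~\ref{lem1}. Recall from Definition~\ref{def6a} that, for pretransformations $\mathsf{p},\mathsf{q}$ in the magma, $\mathsf{p}\circ\mathsf{q}$ is defined precisely when $\mathrm{dom}\!\left(\mathsf{p}\right)\supseteq\mathrm{im}\!\left(\mathsf{q}\right)$, and that in this case $\mathrm{dom}\!\left(\mathsf{p}\circ\mathsf{q}\right)=\mathrm{dom}\!\left(\mathsf{q}\right)$. Hence from the hypothesis that $\mathsf{f}\circ\mathsf{g}$ and $\mathsf{g}\circ\mathsf{h}$ are defined I would first extract the two inclusions
\[
\mathrm{dom}\!\left(\mathsf{f}\right)\supseteq\mathrm{im}\!\left(\mathsf{g}\right),\qquad\mathrm{dom}\!\left(\mathsf{g}\right)\supseteq\mathrm{im}\!\left(\mathsf{h}\right).
\]

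Next I would treat $\left(\mathsf{f}\circ\mathsf{g}\right)\circ\mathsf{h}$. By Definition~\ref{def6a} this is defined iff $\mathrm{dom}\!\left(\mathsf{f}\circ\mathsf{g}\right)\supseteq\mathrm{im}\!\left(\mathsf{h}\right)$; but $\mathrm{dom}\!\left(\mathsf{f}\circ\mathsf{g}\right)=\mathrm{dom}\!\left(\mathsf{g}\right)$ and $\mathrm{dom}\!\left(\mathsf{g}\right)\supseteq\mathrm{im}\!\left(\mathsf{h}\right)$ is exactly the second inclusion above, so $\left(\mathsf{f}\circ\mathsf{g}\right)\circ\mathsf{h}$ is defined.

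Then I would treat $\mathsf{f}\circ\left(\mathsf{g}\circ\mathsf{h}\right)$, which is defined iff $\mathrm{dom}\!\left(\mathsf{f}\right)\supseteq\mathrm{im}\!\left(\mathsf{g}\circ\mathsf{h}\right)$. This is the one place where a previous result is needed: since $\mathsf{g}\circ\mathsf{h}$ is defined, Lemma~\ref{lem1} gives $\mathrm{im}\!\left(\mathsf{g}\right)\supseteq\mathrm{im}\!\left(\mathsf{g}\circ\mathsf{h}\right)$, and combining this with the first inclusion $\mathrm{dom}\!\left(\mathsf{f}\right)\supseteq\mathrm{im}\!\left(\mathsf{g}\right)$ yields $\mathrm{dom}\!\left(\mathsf{f}\right)\supseteq\mathrm{im}\!\left(\mathsf{g}\circ\mathsf{h}\right)$, as required.

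I do not expect any genuine obstacle here: both conclusions fall out immediately once the domain/image bookkeeping of Definition~\ref{def6a} is made explicit, the only non-formal point being the containment $\mathrm{im}\!\left(\mathsf{g}\circ\mathsf{h}\right)\subseteq\mathrm{im}\!\left(\mathsf{g}\right)$ provided by Lemma~\ref{lem1}. The one mild subtlety worth a passing remark is that, in speaking of $\left(\mathsf{f}\circ\mathsf{g}\right)\circ\mathsf{h}$, one tacitly uses that $\mathsf{f}\circ\mathsf{g}$ is again an element of the pretransformation magma, which is precisely what makes $\circ$ a well-defined binary operation on it.
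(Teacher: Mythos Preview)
Your proof is correct and follows essentially the same route as the paper: for $\left(\mathsf{f}\circ\mathsf{g}\right)\circ\mathsf{h}$ you use $\mathrm{dom}\!\left(\mathsf{f}\circ\mathsf{g}\right)=\mathrm{dom}\!\left(\mathsf{g}\right)\supseteq\mathrm{im}\!\left(\mathsf{h}\right)$, and for $\mathsf{f}\circ\left(\mathsf{g}\circ\mathsf{h}\right)$ you invoke Lemma~\ref{lem1} to obtain $\mathrm{im}\!\left(\mathsf{g}\right)\supseteq\mathrm{im}\!\left(\mathsf{g}\circ\mathsf{h}\right)$ and combine it with $\mathrm{dom}\!\left(\mathsf{f}\right)\supseteq\mathrm{im}\!\left(\mathsf{g}\right)$, exactly as the paper does. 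Your version is simply more explicit about the bookkeeping.
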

\begin{proof}
We have $\mathrm{dom}\!\left(\mathsf{f}\circ\mathsf{g}\right)=\mathrm{dom}\!\left(\mathsf{g}\right)\supseteq\mathsf{im}\!\left(\mathsf{h}\right)$,
so $\left(\mathsf{f}\circ\mathsf{g}\right)\circ\mathsf{h}$ is defined.
Also, $\mathrm{dom}\!\left(\mathsf{f}\right)\supseteq\mathsf{im}\!\left(\mathsf{g}\right)$
and by Lemma \ref{lem1} $\mathsf{im}\!\left(\mathsf{g}\right)\supseteq\mathsf{im}\!\left(\mathsf{g}\circ\mathsf{h}\right)$,
so $\mathsf{f}\circ\left(\mathsf{g}\circ\mathsf{h}\right)$ is defined.
\end{proof}
\begin{fact}
\label{f3}Let $\mathsf{f},\mathsf{g},\mathsf{h}$ be elements of
a pretransformation magma{\small{}.} If $\left(\mathsf{f}\circ\mathsf{g}\right)\circ\mathsf{h}$
is defined then $\mathsf{f}\circ\left(\mathsf{g}\circ\mathsf{h}\right)$
is defined.
\end{fact}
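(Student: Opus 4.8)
The plan is simply to unwind what the hypothesis encodes and then reassemble it in the other association, using only Definition \ref{def6a} and Lemma \ref{lem1}. The statement ``$\left(\mathsf{f}\circ\mathsf{g}\right)\circ\mathsf{h}$ is defined'' carries two pieces of information: first, that $\mathsf{f}\circ\mathsf{g}$ is defined, i.e.\ $\mathrm{dom}\!\left(\mathsf{f}\right)\supseteq\mathsf{im}\!\left(\mathsf{g}\right)$; and second, that the outer composite is defined, i.e.\ $\mathrm{dom}\!\left(\mathsf{f}\circ\mathsf{g}\right)\supseteq\mathsf{im}\!\left(\mathsf{h}\right)$. Since $\mathrm{dom}\!\left(\mathsf{f}\circ\mathsf{g}\right)=\mathrm{dom}\!\left(\mathsf{g}\right)$ by Definition \ref{def6a}, the second piece reads $\mathrm{dom}\!\left(\mathsf{g}\right)\supseteq\mathsf{im}\!\left(\mathsf{h}\right)$.

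Next I would check the two conditions required for $\mathsf{f}\circ\left(\mathsf{g}\circ\mathsf{h}\right)$ to be defined. The inner composite $\mathsf{g}\circ\mathsf{h}$ is defined precisely when $\mathrm{dom}\!\left(\mathsf{g}\right)\supseteq\mathsf{im}\!\left(\mathsf{h}\right)$, which is exactly the second piece of information extracted above. For the outer composite I need $\mathrm{dom}\!\left(\mathsf{f}\right)\supseteq\mathsf{im}\!\left(\mathsf{g}\circ\mathsf{h}\right)$; here I invoke Lemma \ref{lem1}, which gives $\mathsf{im}\!\left(\mathsf{g}\right)\supseteq\mathsf{im}\!\left(\mathsf{g}\circ\mathsf{h}\right)$, and chain this with $\mathrm{dom}\!\left(\mathsf{f}\right)\supseteq\mathsf{im}\!\left(\mathsf{g}\right)$ from the first piece to conclude $\mathrm{dom}\!\left(\mathsf{f}\right)\supseteq\mathsf{im}\!\left(\mathsf{g}\circ\mathsf{h}\right)$. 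Both conditions hold, so $\mathsf{f}\circ\left(\mathsf{g}\circ\mathsf{h}\right)$ is defined.

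There is no real obstacle here; the argument is essentially the second half of the proof of Fact \ref{f2} run in isolation. The only point that needs care is the bookkeeping of the nested ``defined'' clauses from Definition \ref{def6a} and the use of the identity $\mathrm{dom}\!\left(\mathsf{f}\circ\mathsf{g}\right)=\mathrm{dom}\!\left(\mathsf{g}\right)$, which is what lets the outer-composite condition be re-expressed in terms of $\mathsf{g}$ alone. I would present the whole thing as a two-line chain of inclusions.
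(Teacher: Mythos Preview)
Your proof is correct and essentially matches the paper's: both extract $\mathrm{dom}(\mathsf{g})=\mathrm{dom}(\mathsf{f}\circ\mathsf{g})\supseteq\mathrm{im}(\mathsf{h})$ to get $\mathsf{g}\circ\mathsf{h}$ defined, and then establish the outer composite. The only cosmetic difference is that the paper finishes by citing Fact~\ref{f2} (since $\mathsf{f}\circ\mathsf{g}$ and $\mathsf{g}\circ\mathsf{h}$ are now both defined), whereas you inline that step by invoking Lemma~\ref{lem1} directly---which, as you yourself observe, is exactly the second half of the proof of Fact~\ref{f2}.
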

\begin{proof}
If $\left(\mathsf{f}\circ\mathsf{g}\right)\circ\mathsf{h}$ is defined
so that $\mathsf{f}\circ\mathsf{g}$ is defined then $\mathrm{dom}\!\left(\mathsf{g}\right)=\mathrm{dom}\!\left(\mathsf{f}\circ\mathsf{g}\right)\supseteq\mathsf{im}\!\left(\mathsf{h}\right)$.
Thus, $\mathsf{g}\circ\mathsf{h}$ is defined so Fact \ref{f2} implies
that $\mathsf{f}\circ\left(\mathsf{g}\circ\mathsf{h}\right)$ is defined.
\end{proof}
The implication in the opposite direction does not hold.
\begin{example}
\label{exa2}Let $\mathsf{f,g,h}$ be pretransformations on $\left\{ 1,2\right\} $;
specifically, $\mathsf{f}=\mathsf{h}=\mathsf{Id}_{\left\{ 1\right\} }$
and $\mathsf{g}=\mathsf{Id}_{\left\{ 1,2\right\} }$. Then, $\mathrm{dom}\!\left(\mathsf{g}\right)\supseteq\mathrm{im}\!\left(\mathsf{h}\right)$
and $\mathrm{im}\!\left(\mathsf{g}\circ\mathsf{h}\right)=\left\{ 1\right\} $,
so $\mathrm{dom}\!\left(\mathsf{f}\right)\supseteq\mathrm{im}\!\left(\mathsf{\mathsf{g}\circ\mathsf{h}}\right)$.
Hence, $\mathsf{f}\circ\left(\mathsf{g}\circ\mathsf{h}\right)$ is
defined, but we do not have $\mathrm{dom}\!\left(\mathsf{f}\right)\supseteq\mathrm{im}\!\left(\mathsf{g}\right)$,
so $\mathsf{f}\circ\mathsf{g}$ is not defined and hence $\left(\mathsf{f}\circ\mathsf{g}\right)\circ\mathsf{h}$
is not defined.
\end{example}
So, somewhat surprisingly, pretransformation magmas do not have a
two-sided notion of associativeness. We need the notion of a transformation
magma and an additional assumption to derive the complement of Fact
\ref{f3}.
\begin{defn}
\label{def7}A\emph{ transformation semigroupoid} $\mathscr{S}_{\!X}$
on $X$ is a transformation magma $\mathscr{F}_{\!X}$ such that if
$\mathrm{dom}\!\left(f\right)\supseteq\mathsf{im}\!\left(g\right)$
for some $f,g\in\mathscr{F}_{\!X}$ then $\mathrm{dom}\!\left(f\right)=\mathsf{\mathrm{cod}}\!\left(g\right)$. 
\end{defn}
Of course, if $\mathrm{dom}\!\left(f\right)=\mathrm{cod}\!\left(g\right)$
then $\mathrm{dom}\!\left(f\right)\supseteq\mathsf{im}\!\left(g\right)$.
Thus, in a transformation semigroupoid $f\circ g$ is defined if and
only if $\mathrm{dom}\!\left(f\right)=\mathsf{\mathrm{cod}}\!\left(g\right)$.

If $\left(f\circ g\right)\circ h$ and $f\circ\left(g\circ h\right)$
are defined then $\mathrm{cod}\!\left(\left(f\circ g\right)\circ h\right)=\mathrm{cod}\!\left(f\circ g\right)=\mathrm{cod}\!\left(f\right)=\mathrm{cod}\!\left(f\circ\left(g\circ h\right)\right)$,
so Fact 1 holds for transformation magmas as well. It is also clear
that the proofs of Facts 2 and 3 apply to transformation magmas as
well. Thus, we can use Facts 1\textendash 3 also when dealing with
transformation magmas. On the other hand, Example 2 applies to transformation
magmas as well, but not to transformation semigroupoids. 
\begin{fact}
\label{f4}Let $f,g,h$ be elements of a transformation semigroupoid.
If $f\circ\left(g\circ h\right)$ is defined then $\left(f\circ g\right)\circ h$
is defined.
\end{fact}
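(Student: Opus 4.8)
The plan is to unwind the two conditions packed into the hypothesis ``$f\circ(g\circ h)$ is defined'', and then reassemble them into the statement ``$(f\circ g)\circ h$ is defined'', invoking the transformation semigroupoid axiom exactly once — precisely at the point where Example \ref{exa2} shows the implication would otherwise fail.

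First I would record what the hypothesis supplies. That $f\circ(g\circ h)$ is defined means both that $g\circ h$ is defined, i.e.\ $\mathrm{dom}\!\left(g\right)\supseteq\mathsf{im}\!\left(h\right)$, and that $\mathrm{dom}\!\left(f\right)\supseteq\mathsf{im}\!\left(g\circ h\right)$. Since we are working in a transformation semigroupoid, the second inclusion forces $\mathrm{dom}\!\left(f\right)=\mathrm{cod}\!\left(g\circ h\right)$, and by Definition \ref{def6} we have $\mathrm{cod}\!\left(g\circ h\right)=\mathrm{cod}\!\left(g\right)$; hence $\mathrm{dom}\!\left(f\right)=\mathrm{cod}\!\left(g\right)$.

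Next I would deduce that $f\circ g$ is defined: from $\mathrm{dom}\!\left(f\right)=\mathrm{cod}\!\left(g\right)$ we get $\mathrm{dom}\!\left(f\right)\supseteq\mathsf{im}\!\left(g\right)$, which is the defining condition for $f\circ g$. This is the only step that uses the transformation semigroupoid axiom, and it is indispensable: without it, $\mathrm{dom}\!\left(f\right)\supseteq\mathsf{im}\!\left(g\circ h\right)$ need not entail $\mathrm{dom}\!\left(f\right)\supseteq\mathsf{im}\!\left(g\right)$, exactly as Example \ref{exa2} witnesses. I regard this as the main obstacle, although it is a short one once the equivalence ``$f\circ g$ defined $\iff\mathrm{dom}\!\left(f\right)=\mathrm{cod}\!\left(g\right)$'', noted just after Definition \ref{def7}, is in hand.

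Finally I would verify that $(f\circ g)\circ h$ is defined. By Definition \ref{def6}, $\mathrm{dom}\!\left(f\circ g\right)=\mathrm{dom}\!\left(g\right)$, and $\mathrm{dom}\!\left(g\right)\supseteq\mathsf{im}\!\left(h\right)$ was obtained in the first step; therefore $\mathrm{dom}\!\left(f\circ g\right)\supseteq\mathsf{im}\!\left(h\right)$, which is precisely the condition for $(f\circ g)\circ h$ to be defined. Alternatively, once $f\circ g$ and $g\circ h$ are both known to be defined, Fact \ref{f2} yields the conclusion at once. This closing part requires no further appeal to the semigroupoid axiom.
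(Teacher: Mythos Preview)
Your proof is correct and follows essentially the same route as the paper: use the transformation semigroupoid axiom to pass from $\mathrm{dom}\!\left(f\right)\supseteq\mathsf{im}\!\left(g\circ h\right)$ to $\mathrm{dom}\!\left(f\right)=\mathrm{cod}\!\left(g\circ h\right)=\mathrm{cod}\!\left(g\right)$, conclude that $f\circ g$ is defined, and then invoke Fact~\ref{f2}. Your write-up is more explicit, but the argument is the same.
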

\begin{proof}
If $f\circ\left(g\circ h\right)$ is defined then $\mathrm{dom}\!\left(f\right)=\mathrm{cod}\!\left(g\circ h\right)=\mathsf{\mathrm{cod}}\!\left(g\right)$.
Thus, $f\circ g$ is defined, and as $g\circ h$ is defined as well
Fact \ref{f2} implies that {\small{}$\left(f\circ g\right)\circ h$
}is defined.
\end{proof}
\begin{thm}
\label{the1}A transformation semigroupoid is a semigroupoid.
\end{thm}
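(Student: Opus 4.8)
The plan is to unwind the definition of $f\prec g\prec h$ for elements $f,g,h$ of a transformation semigroupoid and to show, in each of the resulting cases, that both $(f\circ g)\circ h$ and $f\circ(g\circ h)$ are defined; equality then follows from Fact~\ref{f1}, which (as already observed) holds for transformation magmas and hence for transformation semigroupoids. Since a (pre)transformation magma is a magma in the sense of Definition~\ref{def2}, this will show that a transformation semigroupoid is a semigroupoid as required by Definition~\ref{def3}.

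By the definition of $\prec$ preceding Definition~\ref{def3}, the relation $f\prec g\prec h$ means that one of the following holds: (i) $f\circ g$ and $g\circ h$ are both defined; (ii) $f\circ(g\circ h)$ is defined; (iii) $(f\circ g)\circ h$ is defined. First I would dispose of case~(i): here Fact~\ref{f2} immediately gives that $(f\circ g)\circ h$ and $f\circ(g\circ h)$ are both defined. In case~(iii), $(f\circ g)\circ h$ is defined by hypothesis, and Fact~\ref{f3} yields that $f\circ(g\circ h)$ is defined. In case~(ii), $f\circ(g\circ h)$ is defined by hypothesis, and here one invokes Fact~\ref{f4} to conclude that $(f\circ g)\circ h$ is defined; this is the one place where the defining condition of a transformation semigroupoid (Definition~\ref{def7}) is actually used. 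Hence in every case both composites $(f\circ g)\circ h$ and $f\circ(g\circ h)$ are defined.

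Finally, with both composites defined, Fact~\ref{f1} gives $(f\circ g)\circ h = f\circ(g\circ h)$, which is precisely the associativity clause in Definition~\ref{def3}, completing the proof.

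I do not expect a genuine obstacle here; the only point requiring care is the case analysis itself, and in particular the observation that Fact~\ref{f4}—not merely Facts~\ref{f2}–\ref{f3}—is indispensable in case~(ii). This is exactly because, as Example~\ref{exa2} shows, pretransformation magmas (and transformation magmas) lack a two-sided notion of associativeness; the extra hypothesis built into Definition~\ref{def7} is what restores it, and so the restriction to transformation \emph{semigroupoids} in the statement of the theorem cannot be dropped.
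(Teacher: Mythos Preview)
Your proposal is correct and follows essentially the same approach as the paper's own proof: invoke Facts~\ref{f2}, \ref{f3} and \ref{f4} to conclude that $f\prec g\prec h$ forces both $(f\circ g)\circ h$ and $f\circ(g\circ h)$ to be defined, and then apply Fact~\ref{f1} for their equality. The paper compresses this into a single sentence, whereas you spell out the three-way case split coming from the definition of $f\prec g\prec h$, but the logical content is identical.
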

\begin{proof}
By Facts 2, 3 and 4, if $f\prec g\prec h$ then $\left(f\circ g\right)\circ h$
and $f\circ\left(g\circ h\right)$ are defined, and by Fact 1 this
implies that $\left(f\circ g\right)\circ h=f\circ\left(g\circ h\right)$.
\end{proof}
Poloids are semigroupoids with effective left and right units. Such
units can be added to transformation semigroupoids in a quite natural
way.
\begin{defn}
\label{def8}A\emph{ transformation poloid} $\mathscr{P}_{\!X}$ is
a\emph{ }transformation semigroupoid $\mathscr{S}_{\!X}$ such that
if $f\in\mathscr{S}_{\!X}$ then $I\!d_{\mathrm{dom}\left(f\right)},I\!d_{\mathrm{cod}\left(f\right)}\in\mathscr{S}_{\!X}$. 
\end{defn}
\begin{fact}
\label{f5}Let $\mathscr{P}_{X}$ be a transformation poloid. For
any $f\in\mathscr{P}_{X}$, $I\!d_{\mathrm{dom}\left(f\right)}$ and
$I\!d_{\mathrm{cod}\left(f\right)}$ are units.
\end{fact}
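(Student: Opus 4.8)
The plan is to use the fact that the predicate ``$e$ is a unit'' in a magma constrains only those products $eg$ and $ge$ that actually happen to be defined, so the whole verification reduces to inspecting two cases. Fix $f\in\mathscr{P}_{X}$ and set $D=\mathrm{dom}\!\left(f\right)$, $e=I\!d_{D}$. By Definition \ref{def8} we have $e\in\mathscr{P}_{X}$, so it is meaningful to ask whether $e$ is a unit of the magma. Since $\mathscr{P}_{X}$ is in particular a transformation semigroupoid, a composite $e\circ g$ (respectively $g\circ e$) is defined exactly when $\mathrm{dom}\!\left(e\right)=\mathrm{cod}\!\left(g\right)$ (respectively $\mathrm{dom}\!\left(g\right)=\mathrm{cod}\!\left(e\right)$), and here $\mathrm{dom}\!\left(e\right)=\mathrm{cod}\!\left(e\right)=D$.

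First I would treat left multiplication. Suppose $g\in\mathscr{P}_{X}$ and $e\circ g$ is defined, so that $\mathrm{cod}\!\left(g\right)=D$. From Definition \ref{def6}, $\mathrm{dom}\!\left(e\circ g\right)=\mathrm{dom}\!\left(g\right)$ and $\mathrm{cod}\!\left(e\circ g\right)=\mathrm{cod}\!\left(e\right)=D=\mathrm{cod}\!\left(g\right)$, so $e\circ g$ and $g$ have the same domain and the same codomain. For $x\in\mathrm{dom}\!\left(g\right)$ we have $g\!\left(x\right)\in\mathrm{im}\!\left(g\right)\subseteq\mathrm{cod}\!\left(g\right)=D=\mathrm{dom}\!\left(e\right)$, hence $\left(e\circ g\right)\!\left(x\right)=e\!\left(g\!\left(x\right)\right)=g\!\left(x\right)$. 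Therefore $e\circ g=g$. Next, the symmetric case: if $g\circ e$ is defined then $\mathrm{dom}\!\left(g\right)=D$, and $\mathrm{dom}\!\left(g\circ e\right)=\mathrm{dom}\!\left(e\right)=D=\mathrm{dom}\!\left(g\right)$, $\mathrm{cod}\!\left(g\circ e\right)=\mathrm{cod}\!\left(g\right)$, while for $x\in D=\mathrm{dom}\!\left(e\right)$ we get $\left(g\circ e\right)\!\left(x\right)=g\!\left(e\!\left(x\right)\right)=g\!\left(x\right)$; hence $g\circ e=g$. This shows that $e=I\!d_{\mathrm{dom}\left(f\right)}$ is a unit, and the argument for $I\!d_{\mathrm{cod}\left(f\right)}$ is word for word the same with $D=\mathrm{cod}\!\left(f\right)$, since it used only that $e$ is an identity transformation $I\!d_{D}$ belonging to $\mathscr{P}_{X}$.

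There is essentially no obstacle here: the statement is a bookkeeping check on domains, codomains, and values. The one point that deserves attention is that a transformation carries its codomain as part of its data (Definition \ref{def1}), so to conclude $e\circ g=g$ one must check equality of codomains and not merely of the underlying prefunctions; this is precisely the place where the transformation-semigroupoid condition (Definition \ref{def7}), which upgrades $\mathrm{dom}\!\left(e\right)\supseteq\mathrm{im}\!\left(g\right)$ to $\mathrm{dom}\!\left(e\right)=\mathrm{cod}\!\left(g\right)$, is needed. One should also remember to invoke Definition \ref{def8} so that $I\!d_{\mathrm{dom}\left(f\right)}$ and $I\!d_{\mathrm{cod}\left(f\right)}$ are genuinely elements of the magma, which is what makes the notion of ``unit'' applicable to them.
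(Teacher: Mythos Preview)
Your proof is correct and follows essentially the same approach as the paper's: both verify that $I\!d_{\mathrm{dom}(f)}$ acts as a left and right unit by checking domains, codomains, and values separately, using the transformation-semigroupoid condition (Definition~\ref{def7}) to secure the codomain equality $\mathrm{cod}(e\circ g)=\mathrm{cod}(g)$ (respectively the domain equality for $g\circ e$), and then dispatch $I\!d_{\mathrm{cod}(f)}$ by symmetry. Your write-up is in fact slightly more explicit than the paper's in invoking Definition~\ref{def8} for membership and in justifying why $g(x)\in\mathrm{dom}(e)$, but the argument is the same.
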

\begin{proof}
If $f,g\in\mathscr{P}_{X}$ and $I\!d_{\mathrm{dom}\left(f\right)}\circ g$
is defined then 
\begin{gather*}
\mathrm{dom}\!\left(I\!d_{\mathrm{dom}\left(f\right)}\circ g\right)=\mathrm{dom}\!\left(g\right),\\
\mathrm{cod}\!\left(I\!d_{\mathrm{dom}\left(f\right)}\circ g\right)=\mathrm{cod}\!\left(I\!d_{\mathrm{dom}\left(f\right)}\right)=\mathrm{dom}\!\left(I\!d_{\mathrm{dom}\left(f\right)}\right)=\mathrm{cod}\!\left(g\right),
\end{gather*}
 and $I\!d{}_{\mathrm{dom}\left(\mathsf{f}\right)}\!\left(g\!\left(x\right)\right)=g\!\left(x\right)$
for all $x\in\mathrm{dom}\!\left(g\right)$. Hence, $I\!d_{\mathrm{dom}\left(f\right)}\circ g=g$.

Also, if $f,h\in\mathscr{P}_{X}$ and $h\circ I\!d_{\mathrm{dom}\left(f\right)}$
is defined then
\begin{gather*}
\mathrm{dom}\!\left(h\right)=\mathrm{cod}\!\left(I\!d_{\mathrm{dom}\left(f\right)}\right)=\mathrm{dom}\!\left(I\!d_{\mathrm{dom}\left(f\right)}\right)=\mathrm{dom}\!\left(h\circ I\!d_{\mathrm{dom}\left(f\right)}\right),\\
\mathrm{cod}\!\left(h\right)=\mathrm{cod}\!\left(h\circ I\!d_{\mathrm{dom}\left(f\right)}\right),
\end{gather*}
 and $h\!\left(I\!d{}_{\mathrm{dom}\left(\mathsf{f}\right)}\!\left(x\right)\right)=h\!\left(x\right)$
for all $x\in\mathrm{dom}\!\left(I\!d_{\mathrm{dom}\left(f\right)}\right)=\mathrm{dom}\!\left(h\right)$,
so $h\circ I\!d_{\mathrm{dom}\left(f\right)}=h$.

We have thus shown that $I\!d{}_{\mathrm{dom}\left(f\right)}$ is
a unit. 

It is shown similarly that if $I\!d{}_{\mathrm{cod}\left(f\right)}\circ g$
is defined then $I\!d{}_{\mathrm{cod}\left(f\right)}\circ g=g$, and
if $h\circ I\!d_{\mathrm{cod}\left(f\right)}$ is defined then $h\circ I\!d_{\mathrm{cod}\left(f\right)}=h$,
so $I\!d_{\mathrm{cod}\left(f\right)}$ is a unit as well.
\end{proof}
\begin{fact}
\label{f6}Let $\mathscr{P}_{X}$ be a transformation poloid. For
any $f\in\mathscr{P}_{X}$, $f\circ I\!d_{\mathrm{dom}\left(f\right)}$
and $I\!d_{\mathrm{cod}\left(f\right)}\circ f$ are defined. 
\end{fact}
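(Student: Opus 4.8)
The plan is to reduce the claim to the simple characterization of definedness of $\circ$ in a transformation semigroupoid. Recall from the remarks following Definition~\ref{def7} that in a transformation semigroupoid $f\circ g$ is defined if and only if $\mathrm{dom}(f)=\mathrm{cod}(g)$; since $\mathscr{P}_X$ is in particular a transformation semigroupoid, this criterion is available here.

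First I would note that, by Definition~\ref{def8}, both $I\!d_{\mathrm{dom}(f)}$ and $I\!d_{\mathrm{cod}(f)}$ belong to $\mathscr{P}_X$, so the products $f\circ I\!d_{\mathrm{dom}(f)}$ and $I\!d_{\mathrm{cod}(f)}\circ f$ make sense as candidate products in $\mathscr{P}_X$; it remains only to check that the domain/codomain matching condition holds in each case. For that, recall that for any identity transformation $I\!d_S$ one has $\mathrm{dom}(I\!d_S)=\mathrm{cod}(I\!d_S)=S$.

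Applying this with $S=\mathrm{dom}(f)$ gives $\mathrm{cod}(I\!d_{\mathrm{dom}(f)})=\mathrm{dom}(f)$, which is exactly the condition $\mathrm{dom}(f)=\mathrm{cod}(I\!d_{\mathrm{dom}(f)})$ needed for $f\circ I\!d_{\mathrm{dom}(f)}$ to be defined. Likewise, applying it with $S=\mathrm{cod}(f)$ gives $\mathrm{dom}(I\!d_{\mathrm{cod}(f)})=\mathrm{cod}(f)$, which is the condition $\mathrm{dom}(I\!d_{\mathrm{cod}(f)})=\mathrm{cod}(f)$ needed for $I\!d_{\mathrm{cod}(f)}\circ f$ to be defined. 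This finishes the argument.

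There is no substantive obstacle: the only point requiring care is to invoke the transformation-semigroupoid criterion for definedness rather than the weaker transformation-magma condition $\mathrm{dom}(f)\supseteq\mathsf{im}(g)$ — though in fact the latter also holds, since $\mathsf{im}(I\!d_{\mathrm{dom}(f)})=\mathrm{dom}(f)\subseteq\mathrm{dom}(f)$ and $\mathsf{im}(f)\subseteq\mathrm{cod}(f)=\mathrm{dom}(I\!d_{\mathrm{cod}(f)})$, so either route works. One should also observe that the relevant identity transformations are legitimately taken on the ambient set $X$, which is automatic because $\mathrm{dom}(f),\mathrm{cod}(f)\subseteq X$.
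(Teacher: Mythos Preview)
Your proof is correct and follows essentially the same approach as the paper's own proof: both verify the matching condition $\mathrm{dom}(f)=\mathrm{cod}\bigl(I\!d_{\mathrm{dom}(f)}\bigr)$ and $\mathrm{dom}\bigl(I\!d_{\mathrm{cod}(f)}\bigr)=\mathrm{cod}(f)$ using the fact that $\mathrm{dom}(I\!d_S)=\mathrm{cod}(I\!d_S)=S$. The paper's version is simply more terse, omitting the explicit invocation of Definition~\ref{def8} and the semigroupoid criterion that you spell out.
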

\begin{proof}
We have $\mathrm{dom}\!\left(f\right)=\mathrm{dom}\!\left(I\!d_{\mathrm{dom}\left(f\right)}\right)=\mathrm{cod}\!\left(I\!d_{\mathrm{dom}\left(f\right)}\right)$
and $\mathrm{dom}\!\left(I\!d_{\mathrm{cod}\left(f\right)}\right)=\mathrm{cod}\!\left(f\right)$.
\end{proof}
\begin{thm}
\label{the2}A transformation poloid is a poloid.
\end{thm}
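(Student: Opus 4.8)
The plan is to assemble the three results that immediately precede the statement. First I would observe that, by Definition \ref{def8}, a transformation poloid $\mathscr{P}_X$ is in particular a transformation semigroupoid, so Theorem \ref{the1} already yields that $\mathscr{P}_X$ is a semigroupoid. It therefore only remains to exhibit, for each element, the effective left and right units demanded by Definition \ref{def4}.

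Next I would fix an arbitrary $f\in\mathscr{P}_X$ and take the obvious candidates: set $\epsilon_f=I\!d_{\mathrm{cod}\left(f\right)}$ and $\varepsilon_f=I\!d_{\mathrm{dom}\left(f\right)}$. By Definition \ref{def8} both of these lie in $\mathscr{P}_X$, and by Fact \ref{f5} both are units of the magma $\mathscr{P}_X$. Then I would invoke Fact \ref{f6}, which states precisely that $\epsilon_f\circ f=I\!d_{\mathrm{cod}\left(f\right)}\circ f$ and $f\circ\varepsilon_f=f\circ I\!d_{\mathrm{dom}\left(f\right)}$ are defined. Rewriting $\circ$ in the multiplicative notation used in Definition \ref{def4}, this says exactly that $\epsilon_f f$ and $f\varepsilon_f$ are defined with $\epsilon_f,\varepsilon_f$ units, so $\mathscr{P}_X$ satisfies the poloid axiom and is a poloid.

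Since every step is a direct citation of an earlier result, there is no genuine obstacle here; the one point requiring care is the left/right bookkeeping. Because composition is written applicatively, $\left(f\circ g\right)\!\left(x\right)=f\!\left(g\!\left(x\right)\right)$, the effective \emph{left} unit of $f$ must be the identity on $\mathrm{cod}\!\left(f\right)$ and the effective \emph{right} unit the identity on $\mathrm{dom}\!\left(f\right)$; getting these the right way round (and matching the index conventions of Facts \ref{f5} and \ref{f6}) is the only thing that could go wrong.
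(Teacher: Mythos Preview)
Your proposal is correct and follows exactly the same route as the paper, which simply records the proof as ``Immediate from Facts \ref{f5} and \ref{f6}.'' You have merely made explicit what the paper leaves implicit: the appeal to Theorem \ref{the1} (via Definition \ref{def8}) for the semigroupoid part, and the identification of $I\!d_{\mathrm{cod}(f)}$ and $I\!d_{\mathrm{dom}(f)}$ as the required effective left and right units.
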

\begin{proof}
Immediate from Facts 5 and 6.
\end{proof}
\begin{rem}
{\small{}We have considered two requirements for $\mathsf{f}\circ$$\mathsf{g}$
or $f\circ g$ being defined, namely that $\mathrm{dom}\!\left(\mathsf{f}\right)\supseteq\mathsf{im}\!\left(\mathsf{g}\right)$
or that $\mathrm{dom}\!\left(f\right)=\mathsf{\mathrm{cod}}\!\left(g\right)$.
Other definitions are common in the literature. Instead of requiring
that $\mathrm{dom}\!\left(\mathsf{f}\right)\supseteq\mathsf{im}\!\left(\mathsf{g}\right)$,
it is often required that $\mathrm{dom}\!\left(\mathsf{f}\right)\cap\mathsf{im}\!\left(\mathsf{g}\right)\neq\emptyset$,
and instead of requiring that $\mathrm{dom}\!\left(f\right)=\mathsf{\mathrm{cod}}\!\left(g\right)$,
it is sometimes required that $\mathrm{dom}\!\left(f\right)=\mathsf{\mathrm{im}}\!\left(g\right)$.
Of these alternative definitions, the first one tends to be too weak
for present purposes, while the second one tends to be too restrictive.}{\small \par}

{\small{}For example, if we stipulate that $\mathsf{f}\circ\mathsf{g}$
is defined if and only if $\mathrm{dom}\!\left(\mathsf{f}\right)\cap\mathsf{im}\!\left(\mathsf{g}\right)\neq\emptyset$
then $\mathsf{f}\circ\mathsf{g}$ and $\mathsf{g}\circ\mathsf{h}$
being defined does not imply that $\left(\mathsf{f}\circ\mathsf{g}\right)\circ\mathsf{h}$
and $\mathsf{f}\circ\left(\mathsf{g}\circ\mathsf{h}\right)$ are defined,
contrary to Fact \ref{f2}.}{\small \par}

{\small{}Also, if we stipulate that $f\circ g$ is defined if and
only if $\mathrm{dom}\!\left(f\right)=\mathsf{\mathrm{im}}\!\left(g\right)$
and let $f,g$ be total transformations on $X$, then $f\circ g$
is defined only if $g$ is surjective so that $\mathrm{im}\!\left(g\right)=X$.
Thus, $\overline{\mathcal{F}}{}_{\!X}=\left\{ f\mid f:X\rightarrow X\right\} $
is not a monoid under this function composition. As monoids are poloids,
this anomaly suggests that the condition $\mathrm{dom}\!\left(f\right)=\mathsf{\mathrm{im}}\!\left(g\right)$
for $f\circ g$ to be defined is not appropriate in the context of
poloids.}{\small \par}

{\small{}On the other hand, stipulating that $f\circ g$ is defined
if and only if $\mathrm{dom}\!\left(f\right)\supseteq\mathsf{im}\!\left(g\right)$
does not give a fully associative binary operation (Example 2). This
is a fatal flaw for many purposes, including representing poloids
as magmas of transformations.}{\small \par}

{\small{}We note that the exact formalization of the notion of ``partial
function'' is important. A ``partial function'' $\mathfrak{f}$
is often defined as being equipped only with a domain and an image
(range), and then there are only three reasonable ways of composing
``partial transformations'': $\mathfrak{f}\circ\mathfrak{g}$ is
defined if and only if $\mathrm{dom}\!\left(\mathfrak{f}\right)\cap\mathsf{im}\!\left(\mathsf{\mathfrak{g}}\right)\neq\emptyset$
or $\mathrm{dom}\!\left(\mathfrak{f}\right)\supseteq\mathsf{im}\!\left(\mathfrak{g}\right)$
or $\mathrm{dom}\!\left(\mathfrak{f}\right)=\mathsf{\mathrm{im}}\!\left(\mathsf{\mathfrak{g}}\right)$.
But according to Definition \ref{def1}, ``partial functions'' have
codomains of their own, so we can stipulate that $\mathfrak{f}\circ\mathfrak{g}$
is defined if and only if $\mathrm{dom}\!\left(\mathfrak{f}\right)=\mathsf{\mathrm{cod}}\!\left(\mathfrak{g}\right)$,
and this turns out to be just what we need when specializing magmas
of ``partial transformations'' to semigroupoids and poloids.}{\small \par}
\end{rem}

\section{Poloids and categories}

\subsection{Elementary properties of abstract poloids}

Recall that a poloid $P$ is a  magma satisfying the following conditions:
\begin{lyxlist}{1}
\item [{(P1).}] For any $x\prec y\prec z\in P$, $\left(xy\right)\!z$
and $x\!\left(yz\right)$ are defined and $\left(xy\right)\!z=x\!\left(yz\right)$.
\item [{(P2).}] \noindent For any $x\!\in\!P$ there are units $\epsilon_{\!x},\varepsilon_{\!x}\!\in\!P$
such that $\epsilon_{\!x}x$ and $x\varepsilon_{\!x}$ are defined.
\end{lyxlist}
Let us derive some elementary properties of poloids as abstract algebraic
structures.\newpage{}
\begin{prop}
\label{pro1}Let $P$ be a poloid and $e\in P$ a unit. Then $ee$
is defined and $ee=e$.
\end{prop}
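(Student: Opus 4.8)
The plan is to invoke axiom (P2) with $x=e$ and then exploit the fact that the property of being a \emph{unit} is two-sided, which forces the effective unit attached to $e$ to coincide with $e$ itself. Associativity (P1) will not be needed.

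First I would apply (P2) to the element $e$ itself: there is a unit $\varepsilon_e\in P$ (an effective right unit for $e$) such that the product $e\varepsilon_e$ is defined. Now I would read off this single defined product in two different ways, using the definition of a unit each time. Since $e$ is a unit and $e\varepsilon_e$ is defined, the left-absorption clause in the definition of a unit gives $e\varepsilon_e=\varepsilon_e$. Since $\varepsilon_e$ is a unit and $e\varepsilon_e$ is defined, the right-absorption clause gives $e\varepsilon_e=e$. Comparing the two equalities yields $\varepsilon_e=e$.

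Finally I would substitute back: because $e\varepsilon_e$ is defined and $\varepsilon_e=e$, the product $ee$ is defined, and $ee=e\varepsilon_e=e$, as required. (One could run the same argument with the effective left unit $\epsilon_e$ from (P2), using $\epsilon_e e=\epsilon_e$ from $\epsilon_e$ being a unit and $\epsilon_e e=e$ from $e$ being a unit.)

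I do not expect any genuine obstacle here. The only point that has to be used carefully is that a unit is defined two-sidedly, so that the one product $e\varepsilon_e$ is simultaneously equal to $\varepsilon_e$ (because $e$ absorbs on the left) and to $e$ (because $\varepsilon_e$ absorbs on the right); collapsing these is the whole content of the proof.
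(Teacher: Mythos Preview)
Your proof is correct and essentially identical to the paper's: the paper applies (P2) to $e$ to obtain an effective \emph{left} unit $\epsilon_e$, then reads $\epsilon_e e$ as both $e$ and $\epsilon_e$ to conclude $\epsilon_e=e$, exactly the mirror image of your argument with $\varepsilon_e$ (which you yourself note parenthetically).
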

\begin{proof}
Let $\epsilon_{e}\in P$ be an effective left unit for the unit $e$.
Then, $\epsilon_{e}e$ is defined and $e=\epsilon_{e}e=\epsilon_{e}$,
implying the assertion.
\end{proof}
By Proposition \ref{pro1}, every unit is an effective left and right
unit for itself.
\begin{prop}
\label{pro2}Let $P$ be a poloid. If $\epsilon_{x}$ and $\epsilon_{x}'$
are effective left units for $x\in P$ then $\epsilon_{x}=\epsilon_{x}'$,
and if $\varepsilon{}_{x}$ and $\varepsilon_{x}'$ are effective
right units for $x\in P$ then $\varepsilon_{x}=\varepsilon_{x}'$.
\end{prop}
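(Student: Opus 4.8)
The plan is to prove the first assertion (uniqueness of effective left units); the second is the mirror image. Let $\epsilon_{x}$ and $\epsilon_{x}'$ be effective left units for $x$, so that both are units and, by the remark following Definition \ref{def4}, $\epsilon_{x}x=x=\epsilon_{x}'x$. The strategy is to show that the product $\epsilon_{x}\epsilon_{x}'$ is defined and then let the two unit axioms pin it down from both sides.

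First I would set up the precedence chain $\epsilon_{x}\prec\epsilon_{x}'\prec x$. Since $\epsilon_{x}'x$ is defined we get $\epsilon_{x}'\prec x$ directly. For $\epsilon_{x}\prec\epsilon_{x}'$, observe that $\epsilon_{x}'x$ being defined means $\left(\epsilon_{x}',x\right)\in\mathrm{dom}\!\left(\uppi\right)$, while $\epsilon_{x}x$ being defined means $\left(\epsilon_{x},x\right)\in\mathrm{dom}\!\left(\uppi\right)$; since $\epsilon_{x}'x=x$, this last is exactly $\left(\epsilon_{x},\epsilon_{x}'x\right)\in\mathrm{dom}\!\left(\uppi\right)$. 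Hence $\epsilon_{x}\!\left(\epsilon_{x}'x\right)$ is defined, which by the definition of $\prec$ gives $\epsilon_{x}\prec\epsilon_{x}'$. Now (P1) applies to $\epsilon_{x}\prec\epsilon_{x}'\prec x$, so $\left(\epsilon_{x}\epsilon_{x}'\right)\!x$ is defined; in particular $\epsilon_{x}\epsilon_{x}'$ is defined. Because $\epsilon_{x}'$ is a unit and $\epsilon_{x}\epsilon_{x}'$ is defined, $\epsilon_{x}\epsilon_{x}'=\epsilon_{x}$; because $\epsilon_{x}$ is a unit and $\epsilon_{x}\epsilon_{x}'$ is defined, $\epsilon_{x}\epsilon_{x}'=\epsilon_{x}'$. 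Therefore $\epsilon_{x}=\epsilon_{x}'$. For effective right units $\varepsilon_{x},\varepsilon_{x}'$ the same argument runs with the factors reversed: from $x\varepsilon_{x}=x=x\varepsilon_{x}'$ one gets $x\prec\varepsilon_{x}\prec\varepsilon_{x}'$ (using that $\left(x\varepsilon_{x}\right)\!\varepsilon_{x}'$ is defined), whence (P1) makes $\varepsilon_{x}\varepsilon_{x}'$ defined, and the unit axioms give $\varepsilon_{x}\varepsilon_{x}'=\varepsilon_{x}'$ and $\varepsilon_{x}\varepsilon_{x}'=\varepsilon_{x}$, so $\varepsilon_{x}=\varepsilon_{x}'$.

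The only genuinely delicate step is checking the ``is defined'' claims that license the invocation of (P1) — i.e. verifying membership in $\mathrm{dom}\!\left(\uppi\right)$ carefully, since in a partial magma nothing may be assumed defined without justification. Once the relevant products are known to be defined, the equalities are immediate from the definition of a unit, and no further machinery (not even Proposition \ref{pro1}) is required.
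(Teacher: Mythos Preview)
Your proof is correct and follows essentially the same route as the paper: both arguments observe that $\epsilon_{x}\!\left(\epsilon_{x}'x\right)$ is defined (because $\epsilon_{x}'x=x$ and $\epsilon_{x}x$ is defined), invoke (P1) to conclude that $\left(\epsilon_{x}\epsilon_{x}'\right)\!x$ and hence $\epsilon_{x}\epsilon_{x}'$ is defined, and then use that both factors are units to obtain $\epsilon_{x}=\epsilon_{x}\epsilon_{x}'=\epsilon_{x}'$. Your version is simply more explicit about verifying the precedence chain, which the paper leaves implicit.
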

\begin{proof}
By assumption, $\epsilon_{x}x$ and and $\epsilon_{x}'x$ are defined
and equal to $x$, so $\epsilon_{x}\!\left(\epsilon_{x}'x\right)$
is defined. Thus, $\left(\epsilon_{x}\epsilon_{x}'\right)\!x$ is
defined, so $\epsilon_{x}\epsilon_{x}'$ is defined. As $\epsilon_{x}$
and $\epsilon_{x}'$ are units, this implies $\epsilon_{x}=\epsilon_{x}\epsilon_{x}'=\epsilon_{x}'$.
The uniqueness of the effective right unit for $x$ is proved in the
same way.
\end{proof}
Note that if $xy$ is defined then $\left(\epsilon_{x}x\right)\!y$
is defined so $\epsilon_{x}\!\left(xy\right)$ is defined and $\epsilon_{x}\!\left(xy\right)=\left(\epsilon_{x}x\right)\!y=xy=\epsilon_{xy}\!\left(xy\right)$,
so by Proposition \ref{pro2} we have $\epsilon_{x}=\epsilon_{xy}$.
A similar argument shows that $\varepsilon_{y}=\varepsilon_{xy}$. 

Also note that in a groupoid, where $xx^{-1}$ and $x^{-1}x$ are
defined and units, we have $x\left(x^{-1}x\right)=x$, $x^{-1}\left(xx^{-1}\right)=x^{-1}$,
$\left(xx^{-1}\right)x=x$, and $\left(x^{-1}x\right)x^{-1}=x^{-1}$,
where the four left-hand sides are defined. Thus, by Proposition \ref{pro2}
we have $xx^{-1}=\epsilon_{x}=\varepsilon_{x^{-1}}$ and $x^{-1}x=\varepsilon_{x}=\epsilon_{x^{-1}}$.
\begin{prop}
\label{pro3}Every poloid $P$ can be equipped with surjective functions
\begin{gather*}
s:P\rightarrow E,\qquad x\mapsto\epsilon_{x},\\
t:P\rightarrow E,\qquad x\mapsto\varepsilon_{x},
\end{gather*}
 where $E$ is the set of all units in $P$ and $s\!\left(e\right)=t\!\left(e\right)=e$
for all $e\in E$.
\end{prop}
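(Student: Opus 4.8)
The plan is to assemble the statement from the two preceding propositions together with axiom (P2). First I would note that, by (P2), every $x\in P$ has \emph{some} effective left unit and \emph{some} effective right unit, and by Proposition \ref{pro2} these are \emph{unique}. Hence the assignments $x\mapsto\epsilon_{x}$ and $x\mapsto\varepsilon_{x}$ are genuine (single-valued, everywhere-defined) functions on $P$. Since $\epsilon_{x}$ and $\varepsilon_{x}$ are by definition units, both functions take values in the set $E$ of units, so we obtain $s:P\rightarrow E$ and $t:P\rightarrow E$ as claimed.

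Next I would check the normalization condition $s(e)=t(e)=e$ for $e\in E$. By Proposition \ref{pro1}, for a unit $e$ the product $ee$ is defined and equals $e$; thus $ee$ is defined and $ee=e$, which says precisely that $e$ is an effective left unit for $e$ and also an effective right unit for $e$. By the uniqueness in Proposition \ref{pro2}, $\epsilon_{e}=e$ and $\varepsilon_{e}=e$, i.e. $s(e)=e$ and $t(e)=e$.

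Finally, surjectivity of $s$ and $t$ is an immediate corollary of the previous step: for any $e\in E$ we have exhibited an element of $P$, namely $e$ itself, with $s(e)=e$ and $t(e)=e$, so every element of $E$ lies in the image of $s$ and in the image of $t$. This completes the argument.

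There is no real obstacle here; the only point requiring a moment's care is the well-definedness of $s$ and $t$, which is exactly what Proposition \ref{pro2} supplies, and the recognition that Proposition \ref{pro1} is precisely what is needed both for the normalization $s(e)=t(e)=e$ and, as a free consequence, for surjectivity.
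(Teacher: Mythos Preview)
Your proof is correct and follows exactly the same approach as the paper, which simply states that the result is immediate from (P2), Proposition~\ref{pro1} and Proposition~\ref{pro2}. You have merely unpacked those three ingredients in detail: (P2) for existence, Proposition~\ref{pro2} for well-definedness, and Proposition~\ref{pro1} for $s(e)=t(e)=e$ and hence surjectivity.
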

\begin{proof}
Immediate from (P2), Proposition \ref{pro1} and Proposition \ref{pro2}.
\end{proof}
\begin{prop}
\label{pro4}Let $P$ be a poloid. For any $x,y\in P$, $xy$ is defined
if and only if $\varepsilon_{x}=\epsilon_{y}$.
\end{prop}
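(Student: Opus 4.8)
The plan is to prove both implications using only axiom (P1) together with the uniqueness of effective one-sided units (Proposition \ref{pro2}); the device in each direction is to splice an appropriate effective unit into the product $xy$ so that (P1) can be invoked to reassociate, after which Proposition \ref{pro2} finishes the job.

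For the forward implication I would assume that $xy$ is defined. Since $\epsilon_y$ is a unit with $\epsilon_y y = y$, the expression $x\!\left(\epsilon_y y\right)$ is literally $xy$ and hence is defined; this forces $x \prec \epsilon_y \prec y$. Applying (P1) to this chain, $\left(x\epsilon_y\right)\!y$ is defined, so in particular $x\epsilon_y$ is defined. As $\epsilon_y$ is a unit, it is therefore an effective right unit for $x$; but $\varepsilon_x$ is also an effective right unit for $x$, so Proposition \ref{pro2} yields $\varepsilon_x = \epsilon_y$.

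For the converse I would assume $\varepsilon_x = \epsilon_y$ and write $e$ for this common unit. Then $xe$ is defined (because $e = \varepsilon_x$) and $ey$ is defined (because $e = \epsilon_y$), so $x \prec e \prec y$. By (P1), $\left(xe\right)\!y$ is defined. Since $e$ is a unit and $xe$ is defined we have $xe = x$, so $\left(xe\right)\!y$ is the expression $xy$; hence $xy$ is defined.

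The only delicate point I anticipate is the bookkeeping around the predicate ``defined'' and the precedence relation $\prec$: one must check carefully that substituting the value $y = \epsilon_y y$ (respectively, using $xe = x$) really does produce an expression that is ``defined'' in the precise sense of Section 2, so that the hypothesis $x \prec \epsilon_y \prec y$ (respectively $x \prec e \prec y$) of (P1) is genuinely met and the reassociated product is guaranteed to exist. Once those verifications are in place, everything else is an immediate appeal to Proposition \ref{pro2}.
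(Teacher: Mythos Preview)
Your proof is correct and follows essentially the same approach as the paper. The only cosmetic difference is that in the forward direction the paper splices in $\varepsilon_x$ (writing $(x\varepsilon_x)y$ and deducing that $\varepsilon_x y$ is defined, hence $\varepsilon_x$ is an effective left unit for $y$), whereas you splice in $\epsilon_y$ (writing $x(\epsilon_y y)$ and deducing that $x\epsilon_y$ is defined); these are dual versions of the same manoeuvre, and your converse is identical to the paper's.
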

\begin{proof}
If $xy$ is defined then $\left(x\varepsilon_{x}\right)\!y$ is defined,
so $\varepsilon_{x}y$ is defined and as $\varepsilon_{x}$ is a unit
we have $\varepsilon_{x}y=y=\epsilon_{y}y$, so $\varepsilon_{x}=\epsilon_{y}$
by Proposition \ref{pro2}. Conversely, if $\varepsilon_{x}=\epsilon_{y}$
then $\varepsilon_{x}y$ is defined, and as $x\varepsilon_{x}$ is
defined, $\left(x\varepsilon_{x}\right)\!y=xy$ is defined. 
\end{proof}
A \emph{total poloid} is a poloid $P$ whose binary operation $\uppi$
is a total function.
\begin{prop}
A total poloid has only one unit.
\end{prop}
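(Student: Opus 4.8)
The plan is to exploit Proposition \ref{pro4}, which characterizes definedness of products in a poloid in terms of effective units. If $P$ is a total poloid, then by definition $xy$ is defined for \emph{every} pair $x,y\in P$. Feeding this into Proposition \ref{pro4} immediately yields $\varepsilon_{x}=\epsilon_{y}$ for all $x,y\in P$; that is, there is a single element of $P$ that is simultaneously the effective right unit of every element and the effective left unit of every element.

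Next I would specialize this to units. Let $e$ be any unit of $P$. By Proposition \ref{pro1} (and the remark following it), $e$ is an effective left and right unit for itself, so $\epsilon_{e}=e=\varepsilon_{e}$, the effective units being unique by Proposition \ref{pro2}. Now if $e$ and $e'$ are both units, then applying the identity $\varepsilon_{x}=\epsilon_{y}$ obtained above with $x=e$ and $y=e'$ gives $e=\varepsilon_{e}=\epsilon_{e'}=e'$. Hence all units of $P$ coincide.

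Finally I would observe that $P$ does possess at least one unit: since the underlying set of a magma is non-empty, we may pick some $x\in P$, and then (P2) furnishes an effective left unit $\epsilon_{x}$ for it. Therefore $P$ has exactly one unit. I do not anticipate any genuine obstacle here; the only point that deserves a moment's care is making explicit that the set of units is non-empty, so that the statement ``only one unit'' is not vacuously true.
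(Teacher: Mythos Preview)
Your argument is correct, but it is considerably more elaborate than what the paper does. The paper's proof is a one-liner: for units $e,e'$, totality ensures $ee'$ is defined, and then the definition of unit gives $e=ee'=e'$ immediately. You instead route the argument through Propositions \ref{pro1}, \ref{pro2}, and \ref{pro4}, first deducing that $\varepsilon_{x}=\epsilon_{y}$ for all $x,y$ and then specializing to units via $\epsilon_{e}=e=\varepsilon_{e}$. This works, but the detour through effective-unit machinery is unnecessary here since the bare definition of ``unit'' already yields $ee'=e'$ and $ee'=e$ directly. On the other hand, your observation that (P2) together with non-emptiness of $P$ guarantees at least one unit is a nice point that the paper leaves implicit.
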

\begin{proof}
For any pair $e,e'\in P$ of units, $ee'$ is defined so $e=ee'=e'$.
\end{proof}
\begin{prop}
\label{pro5}A poloid with only one unit is a monoid.
\end{prop}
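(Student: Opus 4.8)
The plan is to verify directly that $P$, equipped with its given binary operation, has the three defining features of a monoid: the operation is total, it is associative, and there is a two-sided identity element. Throughout, write $e$ for the unique unit of $P$, so that $E=\left\{e\right\}$ in the notation of Proposition \ref{pro3}.

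First I would establish totality. By Proposition \ref{pro3} the maps $s$ and $t$ send every $x\in P$ into $E=\left\{e\right\}$, so $\epsilon_{x}=e=\varepsilon_{x}$ for every $x\in P$. Hence for any $x,y\in P$ we have $\varepsilon_{x}=e=\epsilon_{y}$, and Proposition \ref{pro4} then gives that $xy$ is defined. Thus $\uppi$ is a total binary operation on $P$.

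Next, associativity: given any $x,y,z\in P$, the products $xy$ and $yz$ are defined by the previous step, so $x\prec y\prec z$, and (P1) yields that $\left(xy\right)\!z$ and $x\!\left(yz\right)$ are defined and equal. Finally, $e$ is a two-sided identity: since the operation is total, $ex$ and $xe$ are defined for every $x\in P$, and because $e$ is a unit we get $ex=\epsilon_{x}x=x$ and $xe=x\varepsilon_{x}=x$. Therefore $P$ is a total associative magma with a two-sided identity, i.e.\ a monoid.

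There is no genuine obstacle here; every step is an immediate consequence of the elementary properties already established, in particular Propositions \ref{pro3} and \ref{pro4}. The only point deserving a moment's care is that the poloid notion of a \emph{unit} — which a priori constrains only those products in which the element already participates — upgrades to a bona fide identity element precisely because the totality of the operation has first been secured.
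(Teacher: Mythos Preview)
Your proof is correct and follows essentially the same route as the paper's: use the uniqueness of the unit together with Proposition~\ref{pro4} to obtain totality, invoke (P1) for associativity, and conclude that the unique unit is a two-sided identity. The only cosmetic difference is that you cite Proposition~\ref{pro3} explicitly to justify $\epsilon_{x}=e=\varepsilon_{x}$, whereas the paper reads this directly from the uniqueness hypothesis.
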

\begin{proof}
Let $P$ be a poloid. By assumption, there is a unique unit $e\in P$
such that $e=\varepsilon_{x}=\epsilon_{y}$ for any $x,y\in P$. Therefore,
it follows from Proposition \ref{pro4} that $xy$ and $yz$ are defined
for any $x,y,z\in P$. Hence, $\left(xy\right)\!z$ and $x\!\left(yz\right)$
are defined and equal for any $x,y,z\in P$. Also, $x=\epsilon_{x}x=x\varepsilon_{x}$
for any $x\in P$ implies $x=ex=xe$ for any $x\in P$.
\end{proof}
A poloid can thus be regarded as a generalized monoid, and also as
a generalized groupoid; in fact, poloids generalize groups via monoids
and via groupoids.
\begin{prop}
\label{pro6}A groupoid with only one unit is a group.
\end{prop}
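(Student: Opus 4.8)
The plan is to combine Proposition \ref{pro5} with the defining property of a groupoid. First I would invoke Proposition \ref{pro5}: since the groupoid $P$ is in particular a poloid with only one unit, it is a monoid. Write $e$ for its unique unit; by the proof of Proposition \ref{pro5} we have $ex=xe=x$ for every $x\in P$, and the binary operation of $P$ is total.

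Next, fix $x\in P$ and let $x^{-1}$ be the element supplied by Definition \ref{def5}, so that $xx^{-1}$ and $x^{-1}x$ are defined and are both units. Because $e$ is the only unit of $P$, this forces $xx^{-1}=x^{-1}x=e$. Hence $x^{-1}$ is a two-sided inverse of $x$ in the monoid $P$, and since $x$ was arbitrary, every element of $P$ is invertible. A monoid in which every element has a two-sided inverse is exactly a group, so $P$ is a group. (Uniqueness of $x^{-1}$ is already built into Definition \ref{def5}, so nothing more needs to be verified on that score.)

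I do not expect any genuine obstacle here. The only point that requires a moment's care is the identification of the unique poloid-unit $e$ with the identity element of the monoid structure, but this is precisely what the argument for Proposition \ref{pro5} delivers, where one obtains $x=\epsilon_{x}x=x\varepsilon_{x}$ and $\epsilon_{x}=\varepsilon_{x}=e$ for all $x$. Everything else is a direct substitution into the groupoid axiom.
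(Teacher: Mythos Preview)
Your proof is correct and follows the same approach as the paper, which simply observes that a monoid with inverses is a group; you merely spell out explicitly the invocation of Proposition~\ref{pro5} and the identification $xx^{-1}=x^{-1}x=e$ that the paper leaves implicit.
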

\begin{proof}
A monoid with inverses is a group.
\end{proof}

\subsection{Subpoloids, poloid homomorphisms and poloid actions}

Recall that a submonoid of a monoid $M$ is a monoid $M'$ such that
$M'$ is a submagma of $M$ and the unit in $M'$ is the unit in $M$.
Subpoloids can be defined similarly.
\begin{defn}
\label{def9}A \emph{subpoloid} of a poloid $P$ is a poloid $P'$
such that $P'$ is a submagma of $P$ and every unit in $P'$ is a
unit in $P$.
\end{defn}
Homomorphisms and actions of poloids similarly generalize homomorphisms
and actions of monoids.
\begin{defn}
\label{def10}Let $P$ and $Q$ be poloids. A \emph{poloid homomorphism}
from $P$ to $Q$ is a total function $\phi:P\rightarrow Q$ such
that
\begin{lyxlist}{1}
\item [{(1)}] if $x,y\in P$ and $xy$ is defined then $\phi\!\left(x\right)\!\phi\!\left(y\right)$
is defined and $\phi\!\left(xy\right)=\phi\!\left(x\right)\!\phi\!\left(y\right)$;
\item [{(2)}] if $e$ is a unit in $P$ then $\phi\!\left(e\right)$ is
a unit in $Q$.
\end{lyxlist}
A \emph{poloid isomorphism} is a poloid homomorphism $\phi$ such
that the inverse function $\phi^{-1}$ exists and is a poloid homomorphism.
\end{defn}
Note that $\phi\!\left(x\right)=\phi\!\left(\epsilon_{x}x\right)=\phi\!\left(\epsilon_{x}\right)\!\phi\!\left(x\right)$
by (1) and $\phi\!\left(\epsilon_{x}\right)$ is a unit by (2) in
Definition \ref{def10}, so by Proposition \ref{pro2} we have $\phi\!\left(\epsilon_{x}\right)=\epsilon_{\phi\left(x\right)}$.
Dually, $\phi\!\left(\varepsilon_{x}\right)=\varepsilon_{\phi\left(x\right)}$.

Let $P$ be a poloid, let $Q$ be a magma and assume that there exists
a total function $\phi:P\rightarrow Q$ satisfying (1) and (2) in
Definition \ref{def10} and also such that (1') if $\phi\!\left(x\right)\!\phi\!\left(y\right)$
is defined then $xy$ is defined. It is easy to verify that then $\phi\!\left(P\right)$
is a magma, (P1) is satisfied in $\phi\!\left(P\right)$, and if $x'=\phi\!\left(x\right)\in\phi\!\left(P\right)$
then $\phi\!\left(\epsilon_{x}\right)$ ($\phi\!\left(\varepsilon_{x}\right))$
is an effective left (right) unit for $x'$, so $\phi\!\left(P\right)$
is a poloid.
\begin{defn}
\label{def11}A \emph{poloid action} of a poloid $P$ on a set $X$
is a total function 
\[
\alpha:P\rightarrow\alpha\!\left(P\right)\subseteq\overline{\mathscr{F}}_{\!X}
\]
which is a poloid homomorphism such that if $e\in P$ is a unit then
$\alpha\!\left(e\right)\in\alpha\!\left(P\right)$ is an identity
transformation $I\!d_{\mathrm{dom}\left(\alpha\left(e\right)\right)}$
on $X$. 

A \emph{prefunction poloid action} of a poloid $P$ on $X$ is similarly
a total function 
\[
\upalpha:P\rightarrow\upalpha\!\left(P\right)\subseteq\overline{\mathscr{R}}_{\!X}
\]
which is a poloid homomorphism such that if $e\in P$ is a unit then
$\upalpha\!\left(e\right)\in\upalpha\!\left(P\right)$ is an identity
pretransformation $\mathsf{Id}{}_{\mathrm{dom}\left(\upalpha\left(e\right)\right)}$
on $X$. 
\end{defn}
A poloid action $\alpha$ thus assigns to each $x\in P$ a non-empty
transformation
\[
\alpha\!\left(x\right):X\nrightarrow X,\qquad t\mapsto\alpha\!\left(x\right)\!\left(t\right)
\]
such that if $xy$ is defined then $\alpha\!\left(x\right)\circ\alpha\!\left(y\right)$
is defined and $\alpha\!\left(xy\right)=\alpha\!\left(x\right)\circ\alpha\!\left(y\right)$,
and for each unit $e$ in $P$ its image $\alpha\!\left(e\right)$
is a unit in $\alpha\!\left(P\right)$ such that $\alpha\!\left(e\right)\!\left(t\right)=t$
for each $t\in\mathrm{dom}\!\left(\alpha\!\left(e\right)\right)$.
\begin{rem}
{\small{}The definition of a poloid homomorphism given here implies
the usual definition of a monoid homomorphism. The definition of a
monoid action obtained from Definition \ref{def11} is also the usual
one. Specifically, a monoid action $\alpha$ of $M$ on a set $X$
is a function
\[
\alpha:M\rightarrow\alpha\!\left(M\right)\subseteq\overline{\mathcal{F}}_{\!X}
\]
such that $\alpha\!\left(xy\right)\!\left(t\right)=\alpha\!\left(x\right)\circ\alpha\!\left(y\right)\!\left(t\right)$
and $\alpha\!\left(e\right)\!\left(t\right)=t$ for all $x,y\in M$
and all $t\in X$. Denoting $\alpha\!\left(x\right)\!\left(t\right)$
by $x\cdot t$, this is rendered as $\left(xy\right)\cdot t=x\cdot\left(y\cdot t\right)$
and $e\cdot t=t$. Note that $\alpha\!\left(e\right)=I\!d_{X}$ is
a unit in $\overline{\mathcal{F}}_{\!X}$ and thus in $\alpha\!\left(M\right)$.}{\small \par}
\end{rem}

\subsection{Poloids as transformation poloids}

Recall that every transformation poloid is, indeed, a poloid. Up to
isomorphism, there are, in fact, no other poloids.
\begin{lem}
\label{lem2}For any poloid $P$, there is a prefunction poloid action
\[
\upmu:P\rightarrow\upmu\!\left(P\right)\subseteq\mathscr{\overline{R}}\!_{P},\qquad x\mapsto\upmu\!\left(x\right)
\]
 of $P$ on $P$ such that $\upmu$ is a poloid isomorphism.
\end{lem}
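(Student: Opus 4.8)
The plan is to represent $P$ by its left translations. For each $x\in P$, define a pretransformation $\upmu(x)$ on $P$ by $\mathrm{dom}(\upmu(x))=\{y\in P\mid xy\text{ is defined}\}$ and $\upmu(x)(y)=xy$. Since $x\varepsilon_x$ is defined we have $\varepsilon_x\in\mathrm{dom}(\upmu(x))$, so $\upmu(x)$ is a non-empty pretransformation $P\nRightarrow P$ and $\upmu$ is a total function $P\to\overline{\mathscr{R}}_{P}$. Two definedness facts will be used throughout. (a) If $xy$ is defined, then $\upmu(x)\circ\upmu(y)$ is defined: for any $z$ with $yz$ defined we have $x\prec y\prec z$, so $x(yz)$ is defined by (P1), i.e.\ $yz\in\mathrm{dom}(\upmu(x))$; hence $\mathrm{im}(\upmu(y))\subseteq\mathrm{dom}(\upmu(x))$. (b) Conversely, if $\upmu(x)\circ\upmu(y)$ is defined then $xy$ is defined, since $y=y\varepsilon_y=\upmu(y)(\varepsilon_y)\in\mathrm{im}(\upmu(y))\subseteq\mathrm{dom}(\upmu(x))$; note that (b) is exactly condition (1') from the discussion after Definition \ref{def10}.

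First I would verify condition (1) of Definition \ref{def10}. Suppose $xy$ is defined; then $\upmu(x)\circ\upmu(y)$ is defined by (a), and I claim $\upmu(xy)=\upmu(x)\circ\upmu(y)$. For the domains, $\mathrm{dom}(\upmu(x)\circ\upmu(y))=\mathrm{dom}(\upmu(y))=\{z\mid yz\text{ defined}\}$ and $\mathrm{dom}(\upmu(xy))=\{z\mid(xy)z\text{ defined}\}$; by (P1) the former is contained in the latter, and the reverse inclusion follows from Proposition \ref{pro4} together with the identity $\varepsilon_{xy}=\varepsilon_y$ recorded after it, both sets being $\{z\mid\epsilon_z=\varepsilon_y\}$. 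For $z$ in this common domain, $(\upmu(x)\circ\upmu(y))(z)=x(yz)$ and $\upmu(xy)(z)=(xy)z$, which coincide by (P1). This proves (1).

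Next, units. If $e\in P$ is a unit, then $\upmu(e)(y)=ey=y$ for every $y\in\mathrm{dom}(\upmu(e))$, so $\upmu(e)=\mathsf{Id}_{\mathrm{dom}(\upmu(e))}$ is an identity pretransformation on $P$, which is the extra clause in Definition \ref{def11}. Furthermore $\upmu(e)$ is a unit in $\upmu(P)$: if $\upmu(e)\circ\upmu(y)$ is defined then $ey$ is defined by (b), whence $\upmu(e)\circ\upmu(y)=\upmu(ey)=\upmu(y)$ by (1), and symmetrically $\upmu(y)\circ\upmu(e)=\upmu(y)$ whenever defined; since every element of $\upmu(P)$ has the form $\upmu(y)$, this is condition (2) of Definition \ref{def10}. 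As $\upmu$ satisfies (1), (2) and (1'), the remark following Definition \ref{def10} shows that $\upmu(P)$ is a poloid and $\upmu$ is a poloid homomorphism --- indeed a prefunction poloid action of $P$ on $P$.

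Finally, isomorphism. For injectivity, suppose $\upmu(x)=\upmu(x')$; since $x\varepsilon_x$ is defined, $\varepsilon_x\in\mathrm{dom}(\upmu(x))=\mathrm{dom}(\upmu(x'))$, so $x'\varepsilon_x$ is defined, hence $\varepsilon_{x'}=\varepsilon_x$ by Proposition \ref{pro4} (using $\epsilon_{\varepsilon_x}=\varepsilon_x$), and therefore $x=x\varepsilon_x=\upmu(x)(\varepsilon_x)=\upmu(x')(\varepsilon_x)=x'\varepsilon_x=x'$. Thus $\upmu\colon P\to\upmu(P)$ is a bijection and $\upmu^{-1}$ exists. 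For condition (1) applied to $\upmu^{-1}$: if $\upmu(x)\circ\upmu(y)$ is defined, then $xy$ is defined by (b) and $\upmu^{-1}(\upmu(x)\circ\upmu(y))=\upmu^{-1}(\upmu(xy))=xy$. For condition (2): if $\upmu(w)$ is a unit in $\upmu(P)$ and $wx$ is defined, then $\upmu(wx)=\upmu(w)\circ\upmu(x)=\upmu(x)$ by (a) and (1), so $wx=x$ by injectivity, and likewise $xw=x$ whenever $xw$ is defined, so $w$ is a unit in $P$. Hence $\upmu^{-1}$ is a poloid homomorphism and $\upmu$ is a poloid isomorphism. I expect the delicate points to be the inclusion $\{z\mid(xy)z\text{ defined}\}\subseteq\mathrm{dom}(\upmu(y))$, which genuinely uses Proposition \ref{pro4}, and condition (2) for $\upmu^{-1}$, where one must show that a unit of $\upmu(P)$ is necessarily the image of a unit of $P$ by combining injectivity with the homomorphism property; the remaining domain/value bookkeeping is routine.
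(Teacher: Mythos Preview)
Your proof is correct and follows essentially the same approach as the paper: represent $P$ by left translations $\upmu(x)\colon t\mapsto xt$, check that $xy$ is defined if and only if $\upmu(x)\circ\upmu(y)$ is defined, verify $\upmu(xy)=\upmu(x)\circ\upmu(y)$, show units go to identity pretransformations, and establish bijectivity by evaluating at $\varepsilon_x$. The only cosmetic differences are that you invoke Proposition~\ref{pro4} and the identity $\varepsilon_{xy}=\varepsilon_y$ explicitly for the domain equality $\mathrm{dom}(\upmu(xy))=\mathrm{dom}(\upmu(y))$ (the paper states this biconditional directly from (P1)), and you package the fact that $\upmu(P)$ is a poloid via the remark after Definition~\ref{def10} rather than unfolding it.
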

\begin{proof}
Set $\upmu\!\left(x\right)=\left(\overline{\upmu\!\left(x\right)},\mathrm{dom}\!\left(\upmu\!\left(x\right)\right)\right)$,
where $\overline{\upmu\!\left(x\right)}\!\left(t\right)=xt$ for all
$t\in\mathrm{dom}\!\left(\upmu\!\left(x\right)\right)$ and $\mathrm{dom}\!\left(\upmu\!\left(x\right)\right)=\left\{ t\!\mid\!xt\;\mathrm{defined}\right\} $.
Then $\upmu\!\left(x\right)$ is a prefunction $P\nRightarrow P$,
and $\upmu\!\left(x\right)$ is non-empty for each $x\in P$ since
$x\varepsilon_{x}$ is defined for each $x\in P$.

Furthermore, $\overline{\upmu\!\left(x\right)}\!\left(\varepsilon{}_{x}\right)=x\varepsilon{}_{x}=x$
for any $x\in P$, and also $\overline{\upmu\!\left(y\right)}\!\left(\varepsilon{}_{x}\right)=y\varepsilon{}_{x}=y$
for any $y\in P$ such that $y\varepsilon{}_{x}$ is defined since
$\varepsilon{}_{x}$ is a unit. Hence, if $x\neq y$ and $y\varepsilon{}_{x}$
is defined then $\overline{\upmu\!\left(x\right)}\!\left(\varepsilon{}_{x}\right)\neq\overline{\upmu\!\left(y\right)}\!\left(\varepsilon{}_{x}\right)$,
so $\overline{\upmu\!\left(x\right)}\neq\overline{\upmu\!\left(y\right)}$;
if $x\neq y$ and $y\varepsilon{}_{x}$ is not defined then $\mathrm{dom}\left(\alpha\!\left(x\right)\right)\neq\mathrm{dom}\left(\alpha\!\left(y\right)\right)$
since $x\varepsilon{}_{x}$ is defined. Thus, $\upmu$ is a bijection. 

For any fixed $x,y\in P$ such that $xy$ is defined, $\left(xy\right)\!t$
is defined if and only if $t\in P$ is such that $yt$ is defined.
Thus, $\mathrm{im}\!\left(\upmu\!\left(y\right)\right)=\left\{ yt\mid yt\:\mathrm{defined}\right\} =\left\{ yt\mid x\!\left(yt\right)\;\mathrm{defined}\right\} \subseteq\left\{ t\mid xt\;\mathrm{defined}\right\} =$
$\mathrm{dom}\!\left(\upmu\!\left(x\right)\right)$ and $\left\{ t\mid\left(xy\right)\!t\:\mathrm{defined}\right\} =\left\{ t\mid yt\:\mathrm{defined}\right\} $,
so if $xy$ is defined then $\upmu\!\left(x\right)\circ\upmu\!\left(y\right)$
is defined and $\mathrm{dom}\!\left(\upmu\!\left(xy\right)\right)=\mathrm{dom}\!\left(\upmu\!\left(y\right)\right)=\mathrm{dom}\!\left(\upmu\!\left(x\right)\circ\upmu\!\left(y\right)\right)$.
Also, if $xy$ is defined and $t\in\mathrm{dom}\!\left(\upmu\!\left(xy\right)\right)=\mathrm{dom}\!\left(\upmu\!\left(y\right)\right)$,
meaning that $yt$ is defined, then $\left(xy\right)\!t$ and $x\!\left(yt\right)$
are defined and equal, and as $\left(xy\right)\!t=\overline{\upmu\!\left(xy\right)}\!\left(t\right)$
for all $t\in\mathrm{dom}\!\left(\upmu\!\left(xy\right)\right)$ and
$x\!\left(yt\right)=\overline{\upmu\!\left(x\right)}\circ\overline{\upmu\!\left(y\right)}\left(t\right)$
for all $t\in\mathrm{dom}\!\left(\upmu\!\left(x\right)\circ\upmu\!\left(y\right)\right)$,
this implies that if $xy$ is defined then $\upmu\!\left(xy\right)=\upmu\!\left(x\right)\circ\upmu\!\left(y\right)$.

Conversely, if $\upmu\!\left(x\right)\circ\upmu\!\left(y\right)$
is defined so that $\left\{ t\mid yt\;\mathrm{defined}\right\} =\mathrm{dom}\!\left(\upmu\!\left(y\right)\right)=\mathrm{dom}\!\left(\upmu\!\left(x\right)\circ\upmu\!\left(y\right)\right)=\left\{ t\mid x\!\left(yt\right)\;\mathrm{defined}\right\} $,
then $yt$ defined implies $x\!\left(yt\right)$ defined for any fixed
$x,y\in P$. But this implication does not hold if $xy$ is not defined;
then, $x\!\left(y\varepsilon_{y}\right)$ is not defined although
$y\varepsilon_{y}$ is defined. Hence, if $\upmu\!\left(x\right)\circ\upmu\!\left(y\right)$
is defined then $xy$ must be defined. Therefore, $\upmu\!\left(x\right)\circ\upmu\!\left(y\right)=\upmu\!\left(xy\right)\in\upmu\!\left(P\right)$,
so $\upmu\!\left(P\right)$ is a magma with $\circ$ as binary operation.

Let $e\in P$ be a unit. If $\upmu\!\left(e\right)\circ\upmu\!\left(x\right)$
is defined then $ex$ is defined so $\upmu\!\left(x\right)=\upmu\!\left(ex\right)=\upmu\!\left(e\right)\circ\upmu\!\left(x\right)$,
and if $\upmu\!\left(x\right)\circ\upmu\!\left(e\right)$ is defined
then $xe$ is defined so $\upmu\!\left(x\right)=\upmu\!\left(xe\right)=\upmu\!\left(x\right)\circ\upmu\!\left(e\right)$.
Thus, $\upmu\!\left(e\right)\in\upmu\!\left(P\right)$ is a unit.

Conversely, if $f'=\upmu\!\left(f\right)\in\upmu\!\left(P\right)$
is a unit and $fx$ is defined then $\upmu\!\left(f\right)\circ\upmu\!\left(x\right)$
is defined and $\upmu\!\left(fx\right)=\upmu\!\left(f\right)\circ\upmu\!\left(x\right)=\upmu\!\left(x\right)$,
so $fx=x$ since $\upmu$ is injective. Similarly, if $\upmu\!\left(f\right)\in\upmu\!\left(P\right)$
is a unit and $xf$ is defined then $xf=x$. Hence, $f\in P$ is a
unit. 

Thus, we have shown that $\upmu$ satisfies the conditions labeled
(1), (1') and (2) in Section 3.2, so $\upmu\!\left(P\right)$ is a
poloid. Also, (1) and (2) in Definition \ref{def10} are satisfied
by both $\upmu$ and $\upmu^{-1}$, so $\upmu:P\rightarrow\upmu\!\left(P\right)$
is a poloid isomorphism.

The observation that if $e\in P$ is a unit then $\upmu\!\left(e\right)\!\left(t\right)=et=t$
for all $t\in\mathrm{dom}\!\left(\upmu\!\left(e\right)\right)$, so
that $\upmu\!\left(e\right)$ is an identity pretransformation $\mathsf{Id}{}_{\mathrm{dom}\left(\upmu\left(e\right)\right)}$,
completes the proof.
\end{proof}
\begin{lem}
\label{lem3}For any poloid $P$ and function $\upmu$ defined as
in Lemma \ref{lem2}, there is a total function $\tau:\upmu\!\left(P\right)\rightarrow\tau\!\left(\upmu\!\left(P\right)\right)\subseteq\mathscr{\overline{F}}\!_{P}$
such that 
\begin{enumerate}
\item $\tau$ is bijective;
\item $\upmu\!\left(x\right)\circ\upmu\!\left(y\right)$ is defined if and
only if $\tau\!\left(\upmu\!\left(x\right)\right)\circ\tau\!\left(\upmu\!\left(y\right)\right)$
is defined;
\item if $\upmu\!\left(x\right)\circ\upmu\!\left(y\right)$ is defined then
$\tau\!\left(\upmu\!\left(x\right)\circ\upmu\!\left(y\right)\right)=\tau\!\left(\upmu\!\left(x\right)\right)\circ\tau\!\left(\upmu\!\left(y\right)\right)$;
\item if $e\in P$ is a unit then $\tau\!\left(\upmu\!\left(e\right)\right)\in\mathscr{\overline{F}}\!_{P}$
is a unit and identity transformation $I\!d{}_{\mathrm{dom}\left(\tau\left(\upmu\left(e\right)\right)\right)}$.
\end{enumerate}
\end{lem}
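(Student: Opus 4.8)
The plan is to define $\tau$ by equipping each prefunction $\upmu(x)$, $x\in P$, with a suitable codomain while leaving its domain $\mathrm{dom}(\upmu(x))=\{t\mid xt\text{ defined}\}$ and its rule $t\mapsto xt$ untouched. Using the surjection $s\colon P\to E$, $x\mapsto\epsilon_x$, onto the set $E$ of units (Proposition~\ref{pro3}), I would set $\tau(\upmu(x))$ to be the transformation $P\nrightarrow P$ whose underlying prefunction is $\upmu(x)$ and whose codomain is the fibre
\[
s^{-1}(\epsilon_x)=\{u\in P\mid\epsilon_u=\epsilon_x\}.
\]
This is a legitimate element of $\mathscr{\overline{F}}\!_P$: it is non-empty since $\upmu(x)$ is (Lemma~\ref{lem2}), and $\mathrm{im}(\upmu(x))\subseteq s^{-1}(\epsilon_x)$ because $xt$ defined forces $\epsilon_{xt}=\epsilon_x$ (the remark after Proposition~\ref{pro2}). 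Since $\upmu$ is a bijection (Lemma~\ref{lem2}), $x$ is recoverable from $\upmu(x)$, so $\tau$ is well defined on $\upmu(P)$; it is injective because $\tau(\upmu(x))$ determines $\upmu(x)$ and hence $x$, and it is surjective onto $\tau(\upmu(P))$ by construction. This gives~(1).

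For~(2) I would observe that $\tau$ changes neither domains nor images, so $\mathrm{dom}(\tau(\upmu(x)))\supseteq\mathrm{im}(\tau(\upmu(y)))$ holds exactly when $\mathrm{dom}(\upmu(x))\supseteq\mathrm{im}(\upmu(y))$; by the definitions of $\circ$ in $\mathscr{\overline{F}}\!_P$ and $\mathscr{\overline{R}}\!_P$ these are precisely the conditions for $\tau(\upmu(x))\circ\tau(\upmu(y))$ and $\upmu(x)\circ\upmu(y)$ to be defined. For~(3), assuming $\upmu(x)\circ\upmu(y)$ defined, Lemma~\ref{lem2} gives that $xy$ is defined, $\upmu(x)\circ\upmu(y)=\upmu(xy)$, $\mathrm{dom}(\upmu(xy))=\mathrm{dom}(\upmu(y))$, and $(xy)t=x(yt)$ for $t\in\mathrm{dom}(\upmu(y))$; I would then compare $\tau(\upmu(xy))$ with $\tau(\upmu(x))\circ\tau(\upmu(y))$ coordinatewise, both having domain $\mathrm{dom}(\upmu(y))$, the same rule $t\mapsto(xy)t$, and codomain $s^{-1}(\epsilon_x)$ — the codomains agreeing because the left side carries $s^{-1}(\epsilon_{xy})$ and $\epsilon_{xy}=\epsilon_x$.

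For~(4), let $e\in P$ be a unit. By Propositions~\ref{pro1} and~\ref{pro4}, $\mathrm{dom}(\upmu(e))=\{t\mid et\text{ defined}\}=\{t\mid\epsilon_t=\varepsilon_e\}=\{t\mid\epsilon_t=e\}=s^{-1}(e)$, and $s^{-1}(\epsilon_e)=s^{-1}(e)$ as well; since the rule of $\upmu(e)$ is $t\mapsto et=t$, this makes $\tau(\upmu(e))=I\!d_{s^{-1}(e)}=I\!d_{\mathrm{dom}(\tau(\upmu(e)))}$, an identity transformation. That it is a unit of $\tau(\upmu(P))$ then follows from~(2) and~(3): if $\tau(\upmu(e))\circ\tau(\upmu(y))$ is defined, then $\upmu(e)\circ\upmu(y)$ is defined, so $ey$ is defined (Lemma~\ref{lem2}) and equals $y$, whence $\tau(\upmu(e))\circ\tau(\upmu(y))=\tau(\upmu(ey))=\tau(\upmu(y))$; the right-hand identity $\tau(\upmu(y))\circ\tau(\upmu(e))=\tau(\upmu(y))$ is handled symmetrically.

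I expect the only real obstacle to be pinning down the codomain in the definition of $\tau$: it has to be stable under $\circ$ for the codomain comparison in~(3) to go through, and it has to collapse to the domain on units for~(4); the fibre $s^{-1}(\epsilon_x)$ achieves both, whereas the more obvious choice $\mathrm{cod}(\tau(\upmu(x)))=\mathrm{im}(\upmu(x))$ would satisfy~(4) but fail~(3), since in general $\mathrm{im}(\upmu(xy))\subsetneq\mathrm{im}(\upmu(x))$. Once the codomain is fixed, everything else reduces to Lemma~\ref{lem2} and the identities $\epsilon_{xt}=\epsilon_x$ and $\epsilon_e=\varepsilon_e=e$.
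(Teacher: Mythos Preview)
Your proof is correct and follows essentially the same approach as the paper: the paper defines the codomain of $\tau(\upmu(x))$ to be $\mathrm{dom}(\upmu(\epsilon_x))$, which by Proposition~\ref{pro4} equals $\{t\mid \epsilon_t=\varepsilon_{\epsilon_x}=\epsilon_x\}=s^{-1}(\epsilon_x)$, exactly your fibre. The verifications of (1)--(4) then proceed just as you outline, with your added discussion of why $\mathrm{im}(\upmu(x))$ would fail as a codomain being a nice bonus not present in the paper.
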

\begin{proof}
For any prefunction $\upmu\!\left(x\right)=\left(\overline{\upmu\!\left(x\right)},\mathrm{dom}\!\left(\upmu\!\left(x\right)\right)\right):P\nRightarrow P$,
$x\in P$, the tuple 
\[
\left/\upmu\!\left(x\right)\right/=\left(\overline{\upmu\!\left(x\right)},\mathrm{dom}\!\left(\upmu\!\left(x\right)\right),\mathrm{dom}\!\left(\upmu\!\left(\epsilon_{x}\right)\right)\right)
\]
is a function $P\nrightarrow P$ for which $\overline{\left/\upmu\!\left(x\right)\right/}=\overline{\upmu\!\left(x\right)}$,
$\mathrm{dom}\!\left(\left/\upmu\!\left(x\right)\right/\right)=\mathrm{dom}\!\left(\upmu\!\left(x\right)\right)$
and $\mathrm{cod}\!\left(\left/\upmu\!\left(x\right)\right/\right)=\mathrm{dom}\!\left(\upmu\!\left(\epsilon_{x}\right)\right)$.
In fact, $\epsilon_{x}x$ is defined, so $\upmu\!\left(\epsilon_{x}\right)\circ\upmu\!\left(x\right)$
is defined, so $\mathrm{cod}\!\left(\left/\upmu\!\left(x\right)\right/\right)\!=\mathrm{dom}\!\left(\upmu\!\left(\epsilon_{x}\right)\right)\supseteq\mathrm{im}\!\left(\upmu\!\left(x\right)\right)=\overline{\upmu\!\left(x\right)}\!\left(\mathrm{dom}\!\left(\upmu\!\left(x\right)\right)\right)=\overline{\left/\upmu\!\left(x\right)\right/}\!\left(\mathrm{dom}\!\left(\left/\upmu\!\left(x\right)\right/\right)\right)$\linebreak{}
$=\mathrm{im}\!\left(\left/\upmu\!\left(x\right)\right/\right)$,
as required. Thus, there is a total function
\[
\tau:\mathscr{\overline{R}}\!_{P}\supseteq\upmu\!\left(P\right)\rightarrow\tau\!\left(\upmu\!\left(P\right)\right)\subseteq\mathscr{\overline{F}}\!_{P},\qquad\upmu\!\left(x\right)\mapsto\left/\upmu\!\left(x\right)\right/.
\]

It remains to prove (1) \textendash{} (4). (1) and (2) are obvious.
Also, $\mathrm{dom}\!\left(\left/\upmu\!\left(x\right)\circ\upmu\!\left(y\right)\right/\right)=\mathrm{dom}\!\left(\upmu\!\left(x\right)\circ\upmu\!\left(y\right)\right)=\mathrm{dom}\!\left(\upmu\!\left(y\right)\right)=\mathrm{dom}\!\left(\left/\upmu\!\left(y\right)\right/\right)=\mathrm{dom}\!\left(\left/\upmu\!\left(x\right)\right/\circ\left/\upmu\!\left(y\right)\right/\right)$
and\linebreak{}
 $\mathrm{cod}\!\left(\left/\upmu\!\left(x\right)\circ\upmu\!\left(y\right)\right/\right)=\mathrm{cod}\!\left(\left/\upmu\!\left(xy\right)\right/\right)=\mathrm{dom}\!\left(\upmu\!\left(\epsilon_{xy}\right)\right)=\mathrm{dom}\!\left(\upmu\!\left(\epsilon_{x}\right)\right)=\mathrm{cod}\!\left(\left/\upmu\!\left(x\right)\right/\right)=\mathrm{cod}\!\left(\left/\upmu\!\left(x\right)\right/\circ\left/\upmu\!\left(y\right)\right/\right),$
so $\left/\upmu\!\left(x\right)\circ\upmu\!\left(y\right)\right/=\left/\upmu\!\left(x\right)\right/\circ\left/\upmu\!\left(y\right)\right/$.
Concerning (4), $\upmu\!\left(e\right)$ is a unit and identity pretransformation
$\mathsf{Id}{}_{\mathrm{dom}\left(\upmu\left(e\right)\right)}$ in
$\mathscr{\overline{R}}\!_{P}$, so it suffices to note that $\mathrm{cod}\!\left(\left/\upmu\!\left(e\right)\right/\right)=\mathrm{dom}\!\left(\upmu\!\left(\epsilon_{e}\right)\right)=\mathrm{dom}\!\left(\upmu\!\left(e\right)\right)=\mathrm{dom}\!\left(\left/\upmu\!\left(e\right)\right/\right)$.
\end{proof}
\begin{thm}
\label{lem2-1}For any poloid $P$, there is a poloid action
\[
\alpha:P\rightarrow\alpha\!\left(P\right)\subseteq\mathscr{\overline{F}}\!_{P},\qquad x\mapsto\alpha\!\left(x\right)
\]
 of $P$ on $P$ such that $\alpha$ is a poloid isomorphism and $\alpha\!\left(P\right)$
equipped with $\circ$ is a transformation poloid.
\end{thm}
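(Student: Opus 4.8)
The plan is to let $\alpha$ be the composite $\tau\circ\upmu$, where $\upmu:P\to\upmu(P)\subseteq\overline{\mathscr{R}}_P$ is the prefunction poloid action of Lemma \ref{lem2} and $\tau:\upmu(P)\to\tau(\upmu(P))\subseteq\overline{\mathscr{F}}_P$ is the total function of Lemma \ref{lem3}, and to set $\alpha(P)=\tau(\upmu(P))$. Since $\upmu$ is a bijection onto $\upmu(P)$ and $\tau$ is a bijection onto $\tau(\upmu(P))$ by Lemma \ref{lem3}(1), $\alpha$ is a bijection onto $\alpha(P)$. Each $\alpha(x)=\tau(\upmu(x))$ is a non-empty transformation $P\nrightarrow P$, as $\upmu(x)$ is non-empty; hence $\alpha(P)$ with $\circ$ is a transformation magma.

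Next I would check that $\alpha$ satisfies conditions (1), (1') and (2) of the observation following Definition \ref{def10}, so that $\alpha(P)$ is a poloid and $\alpha$ a poloid isomorphism. For (1) and (1'): by Lemma \ref{lem2}, $xy$ is defined iff $\upmu(x)\circ\upmu(y)$ is, and by Lemma \ref{lem3}(2) this holds iff $\alpha(x)\circ\alpha(y)$ is; and when $xy$ is defined, Lemma \ref{lem2} together with Lemma \ref{lem3}(3) gives $\alpha(xy)=\tau(\upmu(x)\circ\upmu(y))=\alpha(x)\circ\alpha(y)$. For (2): if $e$ is a unit in $P$, then by Lemma \ref{lem3}(4) $\alpha(e)$ is a unit in $\overline{\mathscr{F}}_P$, hence also in $\alpha(P)$, and it is the identity transformation $I\!d_{\mathrm{dom}(\alpha(e))}$ — which is the extra requirement of Definition \ref{def11} — so $\alpha$ is a poloid action.

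It remains to verify that $\alpha(P)$ is a transformation poloid, i.e. (Definitions \ref{def7} and \ref{def8}) that it is a transformation semigroupoid and contains $I\!d_{\mathrm{dom}(f)}$ and $I\!d_{\mathrm{cod}(f)}$ for every $f\in\alpha(P)$. For this I would first unwind the construction: for $x\in P$ we have $\mathrm{dom}(\alpha(x))=\mathrm{dom}(\upmu(x))=\{t\mid xt\text{ defined}\}$ and $\mathrm{cod}(\alpha(x))=\mathrm{dom}(\upmu(\epsilon_x))=\{t\mid\epsilon_x t\text{ defined}\}$. Using Proposition \ref{pro4} and the fact (Proposition \ref{pro1}) that a unit is its own effective left and right unit, one gets $\{t\mid xt\text{ defined}\}=\{t\mid\varepsilon_x=\epsilon_t\}=\{t\mid\varepsilon_x t\text{ defined}\}=\mathrm{dom}(\alpha(\varepsilon_x))$ and, likewise, $\{t\mid\epsilon_x t\text{ defined}\}=\mathrm{dom}(\alpha(\epsilon_x))$; combined with Lemma \ref{lem3}(4) this yields $\alpha(\varepsilon_x)=I\!d_{\mathrm{dom}(\alpha(x))}$ and $\alpha(\epsilon_x)=I\!d_{\mathrm{cod}(\alpha(x))}$, both in $\alpha(P)$. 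For the transformation-semigroupoid condition, suppose $\mathrm{dom}(\alpha(x))\supseteq\mathrm{im}(\alpha(y))$; since $y=y\varepsilon_y\in\mathrm{im}(\alpha(y))$, it follows that $xy$ is defined, so $\varepsilon_x=\epsilon_y$ by Proposition \ref{pro4}, and then $\mathrm{cod}(\alpha(y))=\{t\mid\epsilon_y t\text{ defined}\}=\{t\mid\epsilon_y=\epsilon_t\}=\{t\mid\varepsilon_x=\epsilon_t\}=\mathrm{dom}(\alpha(x))$, as needed.

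The reductions through Lemmas \ref{lem2} and \ref{lem3} are routine bookkeeping; the only step carrying real content — and the one I expect to be the main obstacle — is translating $\mathrm{dom}$ and $\mathrm{cod}$ of the $\alpha(x)$ into definedness sets in $P$ and then deducing the transformation-semigroupoid equality $\mathrm{dom}(f)=\mathrm{cod}(g)$ from the mere inclusion $\mathrm{dom}(f)\supseteq\mathrm{im}(g)$ via Proposition \ref{pro4}.
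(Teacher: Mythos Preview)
Your proposal is correct and follows essentially the same route as the paper: set $\alpha=\tau\circ\upmu$, invoke Lemmas \ref{lem2} and \ref{lem3} for the isomorphism and poloid-action claims, and then verify Definitions \ref{def7} and \ref{def8} for $\alpha(P)$. The only notable variation is in checking the transformation-semigroupoid condition: the paper derives $\mathrm{dom}(\alpha(x))=\mathrm{dom}(\alpha(\varepsilon_x))$ abstractly via $\alpha(x)=\alpha(x)\circ\alpha(\varepsilon_x)$ and then uses injectivity of $\alpha$ in a chain of equivalences, whereas you unwind the construction and use the concrete observation $y=\alpha(y)(\varepsilon_y)\in\mathrm{im}(\alpha(y))$ together with Proposition \ref{pro4}; both arguments are equally valid and of comparable length.
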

\begin{proof}
First set $\alpha=\tau\circ\upmu$ and use Lemmas \ref{lem2} and
\ref{lem3} to prove the first part of the theorem. It remains to
show that $\alpha\!\left(P\right)$ is a transformation poloid. Recall
that $\upmu$ and $\tau$ are injective so that $\alpha$ is injective,
and note that $\mathrm{dom}\!\left(\alpha\!\left(x\right)\right)=\mathrm{dom}\!\left(\alpha\!\left(x\varepsilon_{x}\right)\right)=\mathrm{dom}\!\left(\alpha\!\left(x\right)\circ\alpha\!\left(\varepsilon_{x}\right)\right)=\mathrm{dom}\!\left(\alpha\!\left(\varepsilon_{x}\right)\right)$
and that identity transformations, such as $\alpha\!\left(\varepsilon_{x}\right)$
and $\alpha\!\left(\epsilon_{y}\right)$, are determined by their
domains. Hence, we have
\begin{align*}
 & \mathrm{dom}\!\left(\alpha\!\left(x\right)\right)=\mathrm{cod}\!\left(\alpha\!\left(y\right)\right)\\
\Longleftrightarrow\quad & \mathrm{dom}\!\left(\alpha\!\left(\varepsilon_{x}\right)\right)=\mathrm{dom}\!\left(\alpha\!\left(\epsilon_{y}\right)\right)\\
\Longleftrightarrow\quad & \alpha\!\left(\varepsilon_{x}\right)=\alpha\!\left(\epsilon_{y}\right)\\
\Longleftrightarrow\quad & \varepsilon_{x}=\epsilon_{y}\\
\Longleftrightarrow\quad & xy\;\mathrm{defined}\\
\Longleftrightarrow\quad & \alpha\!\left(x\right)\circ\alpha\!\left(y\right)\;\mathrm{defined}.
\end{align*}
Thus the poloid of transformations $\alpha\!\left(P\right)$ is a
transformation semigroupoid by \linebreak{}
Definition \ref{def7}. Also, if $\alpha\!\left(x\right)\in\alpha\!\left(P\right)$
then $\alpha\!\left(\epsilon_{x}\right),\alpha\!\left(\varepsilon_{x}\right)\in\alpha\!\left(P\right)$,
$\alpha\!\left(\epsilon_{x}\right)=I\!d_{\mathrm{dom}\left(\alpha\left(\epsilon_{x}\right)\right)}=I\!d{}_{\mathrm{cod}\left(\alpha\left(x\right)\right)}$
and $\alpha\!\left(\varepsilon{}_{x}\right)=I\!d{}_{\mathrm{dom}\left(\alpha\left(\varepsilon{}_{x}\right)\right)}=I\!d{}_{\mathrm{dom}\left(\alpha\left(x\right)\right)}$,
so the transformation semigroupoid $\alpha\!\left(P\right)$ is a
transformation poloid by Definition \ref{def8}.
\end{proof}
\begin{cor}
\label{the3-1}Any poloid is isomorphic to a transformation poloid.
\end{cor}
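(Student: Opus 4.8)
The plan is to read this off directly from Theorem \ref{lem2-1}, which has already done all of the work. Let $P$ be an arbitrary poloid. Theorem \ref{lem2-1} supplies a map $\alpha\colon P\rightarrow\alpha(P)\subseteq\overline{\mathscr{F}}_P$ that is at once (i) a poloid isomorphism and (ii) such that $\alpha(P)$, equipped with the composition $\circ$ of transformations, is a transformation poloid. Since ``isomorphic'' for poloids means ``related by a poloid isomorphism'' in the sense of Definition \ref{def10}, the mere existence of such an $\alpha$ is precisely the assertion that $P$ is isomorphic to the transformation poloid $\alpha(P)$. So the single step of the proof is: invoke Theorem \ref{lem2-1}.

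It is perhaps worth recording where the real substance sits, since the corollary itself is content-free. The nontrivial work runs through the chain Lemma \ref{lem2} $\Rightarrow$ Lemma \ref{lem3} $\Rightarrow$ Theorem \ref{lem2-1}. Lemma \ref{lem2} builds the left-regular prefunction action $\upmu(x)\colon t\mapsto xt$ and checks that it is a poloid isomorphism onto the prefunction poloid $\upmu(P)$; the one delicate point there is that $\upmu(x)\circ\upmu(y)$ being defined forces $xy$ to be defined, which uses that $y\varepsilon_y$ is always defined whereas $x(y\varepsilon_y)$ is not when $xy$ is undefined. Lemma \ref{lem3} then fits each prefunction $\upmu(x)$ with the codomain $\mathrm{dom}(\upmu(\epsilon_x))$, converting prefunctions into genuine functions without disturbing composition or the behaviour of units. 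Finally, Theorem \ref{lem2-1} checks that $\alpha(P)$ meets the transformation-semigroupoid condition of Definition \ref{def7} --- that $\mathrm{dom}(\alpha(x))\supseteq\mathrm{im}(\alpha(y))$ already forces $\mathrm{dom}(\alpha(x))=\mathrm{cod}(\alpha(y))$ --- together with the identity-transformation condition of Definition \ref{def8}.

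Accordingly, for the corollary there is no obstacle: the only ``hard part'' is the verification, carried out in Theorem \ref{lem2-1}, that the carefully chosen codomains make $\alpha(P)$ a transformation semigroupoid rather than merely a transformation magma, and that work is already behind us. The proof of the corollary is simply the observation that Theorem \ref{lem2-1} yields both halves of the claim --- the isomorphism and the transformation-poloid structure on its image --- at once.
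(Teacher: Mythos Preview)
Your proof is correct and matches the paper's approach exactly: the corollary is stated without proof in the paper, as it is an immediate consequence of Theorem~\ref{lem2-1}. Your additional commentary on where the substance lies (Lemmas~\ref{lem2} and~\ref{lem3} feeding into Theorem~\ref{lem2-1}) is accurate and helpful, though not strictly needed for the corollary itself.
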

This is a 'Cayley theorem' for poloids; it generalizes similar isomorphism
theorems for groupoids, monoids and groups. Note, though, that $\alpha\!\left(P\right)$
is not only a \emph{poloid of transformations} isomorphic to $P$,
but actually a \emph{transformation poloid} isomorphic to $P$, so
Corollary \ref{the3-1} is stronger than a straight-forward generalization
of the 'Cayley theorem' as usually stated.

\subsection{Categories as poloids}

It is no secret that a poloid is the same as a small arrows-only category.
In various guises, (P1), (P2) and Propositions \ref{pro1} \textendash{}
\ref{pro4} appear as axioms or theorems in category theory. The two-axiom
system proposed here is related to the set of ``Gruppoid'' axioms
given by Brandt \cite{key-1}, and essentially equivalent to axiom
systems used by Freyd \cite{key-2}, Hastings \cite{key-6}, and others.
By Proposition 3, one can define functions $s:x\mapsto\epsilon_{x}$
and $t:x\mapsto\varepsilon_{x}$; axiom systems using these two functions
but equivalent to the one given here, as used by Freyd and Scedrov
\cite{key-3}, currently often serve to define arrows-only categories.

Concepts from category theory can be translated into the the language
of poloids and vice versa. For example, an initial object in a category
corresponds to some unit $\epsilon\in P$ such that for every unit
$e\in P$ there is a unique $x\in P$ such that $\epsilon x$ and
$xe$ are defined (hence, $\epsilon x=x=xe)$. More significantly,
in the language of category theory a subpoloid is a subcategory, and
a poloid homomorphism is a functor.

Looking at categories as ``webs of monoids'' does lead to some shift
of emphasis and perspective, however. In particular, whereas the notion
of a category acting on a set is not emphasized in texts on category
theory, the corresponding notion of a poloid action is central when
regarding categories as poloids. For example, recall that letting
a group act on itself we obtain Cayley's theorem for groups. Similarly,
letting a poloid act on itself we have obtained a Cayley theorem for
poloids \cite{key-7}, corresponding to Yoneda's lemma for categories.
Poloid actions are also a tool that can be used to define ordinary
(small) two-sorted categories in terms of poloids \textendash{} we
let a poloid $P$ act on a set $O$ in a special way, then interpreting
the elements of $P$ as morphisms and the elements of $O$ acted on
by $P$ as objects. 

Applying an algebraic perspective on category theory may thus lead
to more than merely a reformulation of category theory, especially
as the algebraic structures related to categories are also linked
to specific magmas of transformations.

\bigskip{}

\appendix

\section{Constellations}

A\emph{ constellation} \cite{key-4,key-7}, is defined in \cite{key-5}
as follows:
\begin{quote}
A {[}\emph{left}{]}\emph{ constellation} is a structure $P$ of signature
$(\cdot,D)$ consisting of a class $P$ with a partial binary operation
and unary operation $D$ {[}...{]} that maps onto the set of \emph{projections}
$E\subseteq P$, so that $E=\left\{ D(x)\mid x\in P\right\} $, and
such that for all $e\in E$, $ee$ exists and equals $e$, and for
which, for all $x,y,z\in P$: 

\begin{lyxlist}{00.00.0000}
\item [{(C1)}] if $x\cdot(y\cdot z)$ exists then so does $(x\cdot y)\cdot z$,
and then the two are equal; 
\item [{(C2)}] $x\cdot(y\cdot z)$ exists if and only if $x\cdot y$ and
$y\cdot z$ exist; 
\item [{(C3)}] for each $x\in P$, $D(x)$ is the unique left identity
of $x$ in $E$ (i.e. it satisfies $D(x)\cdot x=x$); 
\item [{(C4)}] for $a\in P$ and $g\in E$, if $a\cdot g$ exists then
it equals $a$.
\end{lyxlist}
\end{quote}
It turns out that constellations generalize poloids. Recall that by
Definition \ref{def3} a semigroupoid is a partial magma such that
if (a) $x\!\left(yz\right)$ is defined or (b) $\left(xy\right)\!z$
is defined or (c) $xy$ and $yz$ are defined then $x\!\left(yz\right)$
and $\left(xy\right)\!z$ are defined and $x\!\left(yz\right)=\left(xy\right)\!z$.
Removing (a), we obtain the following definition.
\begin{defn}
\label{def12}A \emph{right-directed semigroupoid} is a magma $P$
such that, for any $x,y,z\in P$, if $\left(xy\right)\!z$ is defined
or $xy$ and $yz$ are defined then $\left(xy\right)\!z$ and $x\!\left(yz\right)$
are defined and $x\!\left(yz\right)=\left(xy\right)\!z$. 
\end{defn}
The condition in this definition corresponds to conditions (C1) and
(C2) in \cite{key-5} except for some non-substantial differences.
First, we are defining here the left-right dual of the notion defined
by (C1) and (C2). This amounts to a difference in notation only, deriving
from the fact that functions are composed from left to right in \cite{key-5}
while they are composed from right to left here. Second, it is not
necessary to postulate that if $\left(xy\right)\!z$ is defined then
$xy$ and $yz$ are defined, in accordance with (C2), because, by
Definition \ref{def12}, if $\left(xy\right)\!z$ is defined then
$x(yz)$ is defined, so $xy$ and $yz$ are defined. Finally, in \cite{key-5}
$P$ is assumed to be a class rather than a set; this difference has
to do with set-theoretic considerations that need not concern us here. 

We shall need some generalizations of the unit concept. First, a \emph{left
unit} in $P$ is an element $\epsilon$ of $P$ such that $\epsilon x=x$
for all $x\in P$ such that $\epsilon x$ is defined, while a \emph{right
unit} in $P$ is an element $\varepsilon$ of $P$ such that $x\varepsilon=x$
for all $x\in P$ such that $x\varepsilon$ is defined. Also, a \emph{local
left unit} $\lambda_{x}$ for $x\in P$ is an element of $P$ such
that $\lambda_{x}x$ is defined and $\lambda_{x}x=x$, while a \emph{local
right unit} $\rho_{x}$ for $x\in P$ is an element of $P$ such that
$x\rho_{x}$ is defined and $x\rho_{x}=x$.
\begin{defn}
\label{def13}A \emph{right poloid} is a right-directed semigroupoid
$P$ such that for any $x\in P$ there is a unique left unit $\varphi_{x}\in P$
such that $\varphi_{x}$ is a local right unit for $x$.
\end{defn}
\begin{prop}
\label{pro7}Let $P$ be a right poloid. If $\epsilon\in P$ is a
left unit then $\epsilon\epsilon$ is defined and $\epsilon\epsilon=\epsilon$.
\end{prop}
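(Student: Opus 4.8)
The plan is to mimic the argument of Proposition \ref{pro1}, replacing the effective left unit there by the special local right unit supplied by the definition of a right poloid. The only data available about $\epsilon$ beyond its being a left unit is what Definition \ref{def13} grants: for $\epsilon$ itself there is a (unique) left unit $\varphi_{\epsilon}\in P$ that is a local right unit for $\epsilon$, i.e. $\epsilon\varphi_{\epsilon}$ is defined and $\epsilon\varphi_{\epsilon}=\epsilon$. So I would begin by introducing $\varphi_{\epsilon}$ and recording these two facts.

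Next I would exploit the fact that $\epsilon\varphi_{\epsilon}$ is defined in two ways at once. On the one hand, since $\varphi_{\epsilon}$ is a local right unit for $\epsilon$, $\epsilon\varphi_{\epsilon}=\epsilon$. On the other hand, $\epsilon$ is a left unit and $\epsilon\varphi_{\epsilon}$ is defined, so $\epsilon\varphi_{\epsilon}=\varphi_{\epsilon}$. Comparing, $\epsilon=\varphi_{\epsilon}$.

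Finally I would substitute: $\epsilon\epsilon=\epsilon\varphi_{\epsilon}$, which is defined, and equals $\epsilon$. That completes the proof.

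There is essentially no real obstacle here; the one point requiring a moment's thought is recognizing that no appeal to the \emph{uniqueness} of $\varphi_{\epsilon}$ is needed — mere existence of a left unit that is a local right unit for $\epsilon$ suffices — and that the left-unit property of $\epsilon$ applies precisely because $\epsilon\varphi_{\epsilon}$ is already known to be defined. This parallels exactly how Proposition \ref{pro1} is obtained from the existence of an effective left unit.
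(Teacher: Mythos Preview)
Your proof is correct and follows essentially the same route as the paper's: introduce the local right unit $\varphi_{\epsilon}$ for $\epsilon$, use both the local-right-unit property and the left-unit property of $\epsilon$ on the defined product $\epsilon\varphi_{\epsilon}$ to conclude $\epsilon=\varphi_{\epsilon}$, and then read off that $\epsilon\epsilon$ is defined and equals $\epsilon$. Your remark that uniqueness of $\varphi_{\epsilon}$ plays no role is also accurate.
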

\begin{proof}
Let $\varphi_{\epsilon}\in P$ be a local right unit for the left
unit $\epsilon$. Then $\epsilon\varphi_{\epsilon}$ is defined and
$\varphi_{\epsilon}=\epsilon\varphi_{\epsilon}=\epsilon$, and this
implies the assertion.
\end{proof}
Thus, the left unit $\epsilon$ is the unique local right unit $\varphi_{e}$
for itself.

Disregarding (C1) and (C2), which were incorporated in Definition
\ref{def12}, the requirements stated in the definition cited above
can be summed up as follows:
\begin{lyxlist}{00.00.0000}
\item [{(C)}] For each $x\in P$, there is exactly one $D\!\left(x\right)\in E=\left\{ D\!\left(x\right)\mid x\in P\right\} $
such that $D\!\left(x\right)\cdot x$ is defined and $D\!\left(x\right)\cdot x=x$,
and every $e\in E$ is a right unit in $P$ and such that $e\cdot e$
is defined and equal to $e$. 
\end{lyxlist}
Using (C), it can be proved as in Proposition \ref{pro7} that if
$f\in P$ is a right unit then $f\cdot f$ is defined and $f\cdot f=f$,
so $f$ is the unique local left unit $D\!\left(f\right)$ for itself.
Thus, $E$ equals the set of right units in $P$, since conversely
every $e\in E$ is a right unit in $P$ by (C). As all right units
are idempotent, this means that the requirement that all elements
of $E$ are idempotent is redundant, so (C) can be simplified to:
\begin{lyxlist}{00.00.0000}
\item [{(C{*})}] For each $x\in P$, there is exactly one right unit $D\!\left(x\right)\in P$
such that $D\!\left(x\right)\cdot x$ is defined and $D\!\left(x\right)\cdot x=x$. 
\end{lyxlist}
In our terminology, this means, of course, that for any $x\in P$
there is a unique left unit $\varphi_{x}$ in $P$ such $\varphi_{x}$
is a local right unit for $x$. We conclude that a (small) constellation
is just a right poloid; note that $D$ is just the function $x\mapsto\varphi_{x}$.
Proposition \ref{pro7} generalizes Proposition \ref{pro1}, and there
are also natural generalizations of Propositions \ref{pro2} \textendash{}
\ref{pro4} to right poloids.

It should be pointed out that in \cite{key-5} an alternative definition
of constellations is also given; this definition is essentially the
same as Definition \ref{def13} here (see Proposition 2.9 in \cite{key-5}).
So while the definition of constellations cited above reflects the
historical development of that notion, it has been shown here and
in \cite{key-5} that a more direct approach can also be used.\medskip{}

Let us also look at the transformation systems corresponding to constellations.
\newpage{}
\begin{thm}
\label{the3}A pretransformation magma is a right-directed semigroupoid.
\end{thm}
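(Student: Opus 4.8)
The plan is to verify directly the two clauses in the definition of a right-directed semigroupoid (Definition \ref{def12}) for the operation $\circ$ on a pretransformation magma, and to observe that essentially all the work has already been done in Facts \ref{f1}, \ref{f2} and \ref{f3}. So, fixing $\mathsf{f},\mathsf{g},\mathsf{h}$ in the magma, I only need to treat the two cases making up the hypothesis ``$(\mathsf{f}\circ\mathsf{g})\circ\mathsf{h}$ is defined, or $\mathsf{f}\circ\mathsf{g}$ and $\mathsf{g}\circ\mathsf{h}$ are defined''.

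In the first case, suppose $(\mathsf{f}\circ\mathsf{g})\circ\mathsf{h}$ is defined. Then Fact \ref{f3} immediately gives that $\mathsf{f}\circ(\mathsf{g}\circ\mathsf{h})$ is defined as well, so both parenthesizations are defined, and Fact \ref{f1} yields $(\mathsf{f}\circ\mathsf{g})\circ\mathsf{h}=\mathsf{f}\circ(\mathsf{g}\circ\mathsf{h})$. In the second case, suppose $\mathsf{f}\circ\mathsf{g}$ and $\mathsf{g}\circ\mathsf{h}$ are defined. Then Fact \ref{f2} says that both $(\mathsf{f}\circ\mathsf{g})\circ\mathsf{h}$ and $\mathsf{f}\circ(\mathsf{g}\circ\mathsf{h})$ are defined, and again Fact \ref{f1} gives their equality. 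In both cases the conclusion required by Definition \ref{def12} holds, so the pretransformation magma is a right-directed semigroupoid.

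There is in fact no real obstacle here; the content of the result is that the ``missing'' implication — namely that $\mathsf{f}\circ(\mathsf{g}\circ\mathsf{h})$ being defined forces $(\mathsf{f}\circ\mathsf{g})\circ\mathsf{h}$ to be defined, which fails for pretransformation magmas by Example \ref{exa2} — is precisely the clause that Definition \ref{def12} drops relative to Definition \ref{def3}. Thus the only thing to be careful about is to invoke Facts \ref{f1}--\ref{f3} in the right combination and not to appeal to any converse of Fact \ref{f3}; that is what distinguishes a right-directed semigroupoid from a semigroupoid in this setting, and it is exactly why the full associativity statement (Example \ref{exa2}) is not contradicted.
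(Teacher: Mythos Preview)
Your proof is correct and follows exactly the approach of the paper, which simply says ``Use Facts \ref{f1}--\ref{f3} in Section 2.3''; you have merely spelled out the two cases explicitly. The additional commentary about Example \ref{exa2} and the missing converse of Fact \ref{f3} is accurate but not needed for the argument itself.
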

\begin{proof}
Use Facts \ref{f1} \textendash{} \ref{f3} in Section 2.3.
\end{proof}
A \emph{domain} pretransformation magma is a pretransformation magma
$\mathscr{R}_{\!X}$ such that if $\mathsf{f}\!\in\!\mathscr{R}_{\!X}$
then $\mathsf{Id}_{\mathrm{dom}\left(\mathsf{f}\right)}\!\in\!\mathscr{R}_{\!X}$.
Corresponding to Theorem \ref{the2} in Section 2.3, we have the following
result.
\begin{thm}
\label{the4}A domain pretransformation magma is a right poloid.
\end{thm}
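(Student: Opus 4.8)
The plan is to combine Theorem~\ref{the3} with the evident candidate for the unit function, paralleling the way Facts~\ref{f5}--\ref{f6} and Theorem~\ref{the2} handle transformation poloids. Since a domain pretransformation magma is in particular a pretransformation magma, Theorem~\ref{the3} already shows that it is a right-directed semigroupoid in the sense of Definition~\ref{def12}. So, by Definition~\ref{def13}, everything reduces to showing that for each $\mathsf{f}\in\mathscr{R}_{\!X}$ there is a \emph{unique} left unit $\varphi_{\mathsf{f}}\in\mathscr{R}_{\!X}$ which is a local right unit for $\mathsf{f}$.

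For existence I would take $\varphi_{\mathsf{f}}=\mathsf{Id}_{\mathrm{dom}\left(\mathsf{f}\right)}$, which lies in $\mathscr{R}_{\!X}$ precisely because $\mathscr{R}_{\!X}$ is a \emph{domain} pretransformation magma (and is a legitimate element of $\mathscr{R}_{\!X}$ since $\mathsf{f}$ has non-empty domain). Then three short computations from Definition~\ref{def6a} finish the existence part. First, $\varphi_{\mathsf{f}}$ is a left unit: if $\varphi_{\mathsf{f}}\circ\mathsf{g}$ is defined then $\mathrm{im}\left(\mathsf{g}\right)\subseteq\mathrm{dom}\left(\varphi_{\mathsf{f}}\right)=\mathrm{dom}\left(\mathsf{f}\right)$, so $\left(\varphi_{\mathsf{f}}\circ\mathsf{g}\right)\!\left(x\right)=\varphi_{\mathsf{f}}\!\left(\mathsf{g}\!\left(x\right)\right)=\mathsf{g}\!\left(x\right)$ for every $x\in\mathrm{dom}\left(\mathsf{g}\right)=\mathrm{dom}\left(\varphi_{\mathsf{f}}\circ\mathsf{g}\right)$, i.e.\ $\varphi_{\mathsf{f}}\circ\mathsf{g}=\mathsf{g}$. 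Second, $\varphi_{\mathsf{f}}$ is a local right unit for $\mathsf{f}$: since $\mathrm{im}\left(\varphi_{\mathsf{f}}\right)=\mathrm{dom}\left(\mathsf{f}\right)$, the composite $\mathsf{f}\circ\varphi_{\mathsf{f}}$ is defined, it has domain $\mathrm{dom}\left(\mathsf{f}\right)$, and $\left(\mathsf{f}\circ\varphi_{\mathsf{f}}\right)\!\left(x\right)=\mathsf{f}\!\left(x\right)$ there, so $\mathsf{f}\circ\varphi_{\mathsf{f}}=\mathsf{f}$.

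For uniqueness, suppose $\psi\in\mathscr{R}_{\!X}$ is a left unit which is also a local right unit for $\mathsf{f}$. From $\mathsf{f}\circ\psi=\mathsf{f}$ together with $\mathrm{dom}\left(\mathsf{f}\circ\psi\right)=\mathrm{dom}\left(\psi\right)$ one reads off $\mathrm{dom}\left(\psi\right)=\mathrm{dom}\left(\mathsf{f}\right)=\mathrm{im}\left(\varphi_{\mathsf{f}}\right)$, so $\psi\circ\varphi_{\mathsf{f}}$ is defined; since $\psi$ is a left unit, $\psi\circ\varphi_{\mathsf{f}}=\varphi_{\mathsf{f}}$. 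On the other hand $\left(\psi\circ\varphi_{\mathsf{f}}\right)\!\left(x\right)=\psi\!\left(\varphi_{\mathsf{f}}\!\left(x\right)\right)=\psi\!\left(x\right)$ for every $x\in\mathrm{dom}\left(\varphi_{\mathsf{f}}\right)=\mathrm{dom}\left(\mathsf{f}\right)=\mathrm{dom}\left(\psi\right)$, so $\psi$ and $\varphi_{\mathsf{f}}$ have the same domain and the same values, hence $\psi=\varphi_{\mathsf{f}}$. I expect no genuine obstacle; the single point that needs care is the asymmetry of the defining condition ``$\mathrm{dom}\supseteq\mathrm{im}$'' for a composite to be defined — in particular, one must first pin down $\mathrm{dom}\left(\psi\right)=\mathrm{dom}\left(\mathsf{f}\right)$ before invoking the left-unit property of $\psi$ on $\varphi_{\mathsf{f}}$, and this is exactly where the ``domain'' hypothesis on $\mathscr{R}_{\!X}$ is essential, since it is what supplies $\varphi_{\mathsf{f}}=\mathsf{Id}_{\mathrm{dom}\left(\mathsf{f}\right)}$ inside $\mathscr{R}_{\!X}$ in the first place.
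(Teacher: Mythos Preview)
Your proof is correct and follows essentially the same route as the paper: invoke Theorem~\ref{the3} for the right-directed semigroupoid structure, take $\mathsf{Id}_{\mathrm{dom}\left(\mathsf{f}\right)}$ as the candidate for $\varphi_{\mathsf{f}}$, verify by direct computation that it is a left unit and a local right unit for $\mathsf{f}$, and establish uniqueness by composing an arbitrary candidate $\psi$ with $\mathsf{Id}_{\mathrm{dom}\left(\psi\right)}$ on the right. The only cosmetic difference is that the paper's uniqueness argument first proves the slightly more general fact that \emph{every} left unit $\upepsilon\in\mathscr{R}_{\!X}$ satisfies $\upepsilon=\mathsf{Id}_{\mathrm{dom}\left(\upepsilon\right)}$ and then specializes, whereas you first pin down $\mathrm{dom}\left(\psi\right)=\mathrm{dom}\left(\mathsf{f}\right)$ and apply the same computation directly; the underlying calculation is identical.
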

\begin{proof}
In view of Theorem \ref{the3}, it suffices to show that for any $\mathsf{f}\in\mathscr{R}_{\!X}$
there is a unique left unit $\upvarphi_{\mathsf{f}}\in\mathscr{R}_{\!X}$
such that $\mathsf{f}\circ\upvarphi_{\mathsf{f}}$ is defined and
equal to $\mathsf{f}$, namely $\mathsf{Id}_{\mathrm{dom}\left(\mathsf{f}\right)}$.

If $\mathsf{f},\mathsf{g}\in\mathscr{R}_{\!X}$ and $\mathsf{Id}_{\mathrm{dom}\left(\mathsf{f}\right)}\circ\mathsf{g}$
is defined so that $\mathrm{dom}\!\left(\mathsf{Id}_{\mathrm{dom}\left(\mathfrak{\mathsf{f}}\right)}\right)\supseteq\mathrm{im}\!\left(\mathsf{g}\right)$
then 
\begin{gather*}
\mathrm{dom}\!\left(\mathsf{Id}_{\mathrm{dom}\left(\mathsf{f}\right)}\circ\mathsf{g}\right)=\mathrm{dom}\!\left(\mathfrak{\mathsf{g}}\right),\\
\mathsf{Id}_{\mathrm{dom}\left(\mathsf{f}\right)}\circ\mathsf{g}\left(x\right)=\mathsf{Id}_{\mathrm{dom}\left(\mathsf{f}\right)}\!\left(\mathsf{g}\!\left(x\right)\right)=\mathsf{g}\!\left(x\right)
\end{gather*}
for all $x\!\in\!\mathrm{dom}\!\left(\mathsf{g}\right)$, meaning
that $\mathfrak{\mathsf{Id}}_{\mathrm{dom}\left(\mathsf{f}\right)}\!\circ\mathsf{g}=\mathsf{g}$.
Thus, $\mathfrak{\mathsf{Id}}_{\mathrm{dom}\left(\mathsf{f}\right)}$
is a left unit in $\mathscr{R}_{\!X}$. 

Also, $\mathrm{dom}\!\left(\mathsf{f}\right)=\mathrm{dom}\!\left(\mathsf{Id}_{\mathrm{dom}\left(\mathsf{f}\right)}\right)=\mathrm{im}\!\left(\mathsf{Id}_{\mathrm{dom}\left(\mathsf{f}\right)}\right)$,
so $\mathsf{f}\circ\mathsf{Id}_{\mathrm{dom}\left(\mathfrak{\mathsf{f}}\right)}$
is defined, and
\begin{gather*}
\mathrm{dom}\!\left(\mathfrak{\mathsf{f}}\circ\mathsf{Id}_{\mathrm{dom}\left(\mathsf{f}\right)}\right)=\mathrm{dom}\!\left(\mathsf{Id}_{\mathrm{dom}\left(\mathfrak{\mathsf{f}}\right)}\right)=\mathrm{dom}\!\left(\mathfrak{\mathsf{f}}\right),\\
\mathfrak{\mathsf{f}}\circ\mathsf{Id}_{\mathrm{dom}\left(\mathsf{f}\right)}\left(x\right)=\mathsf{f}\!\left(\mathsf{Id}{}_{\mathrm{dom}\left(\mathsf{f}\right)}\!\left(x\right)\right)=\mathfrak{\mathsf{f}}\!\left(x\right)
\end{gather*}
for all $x\in\mathrm{dom}\!\left(\mathsf{Id}_{\mathrm{dom}\left(\mathsf{f}\right)}\right)=\mathrm{dom}\!\left(\mathfrak{\mathsf{f}}\right)$.
Thus, $\mathsf{f}\circ\mathsf{Id}_{\mathrm{dom}\left(\mathsf{f}\right)}$
is defined and equal to $\mathsf{f}$, so $\mathsf{Id}{}_{\mathrm{dom}\left(\mathsf{f}\right)}\in\mathscr{R}_{\!X}$
is a left unit $\upvarphi_{\mathsf{f}}$ such that $\mathsf{f}\circ\upvarphi_{\mathsf{f}}$
is defined and equal to $\mathsf{f}$.

It remains to show that $\mathsf{Id}_{\mathrm{dom}\left(\mathsf{f}\right)}$
is the only such $\upvarphi_{\mathsf{f}}$. Let $\upepsilon\in\mathscr{R}_{\!X}$
be a left unit. Then $\mathfrak{\mathsf{Id}}_{\mathrm{dom}\left(\upepsilon\right)}\in\mathscr{R}_{\!X}$
and as $\mathrm{dom}\!\left(\upepsilon\right)=\mathrm{dom}\!\left(\mathfrak{\mathsf{Id}}_{\mathrm{dom}\left(\upepsilon\right)}\right)=\mathrm{im}\!\left(\mathfrak{\mathsf{Id}}_{\mathrm{dom}\left(\upepsilon\right)}\right)$,
so that $\upepsilon\circ\mathfrak{\mathsf{Id}}_{\mathrm{dom}\left(\upepsilon\right)}$
is defined, we have $\upepsilon\circ\mathfrak{\mathsf{Id}}_{\mathrm{dom}\left(\upepsilon\right)}=\mathfrak{\mathsf{Id}}_{\mathrm{dom}\left(\upepsilon\right)}$.
On the other hand,
\begin{gather*}
\mathrm{dom}\left(\upepsilon\circ\mathsf{Id}_{\mathrm{dom}\left(\upepsilon\right)}\right)=\mathrm{dom}\left(\mathsf{Id}_{\mathrm{dom}\left(\upepsilon\right)}\right)=\mathrm{dom}\!\left(\upepsilon\right),\\
\upepsilon\circ\mathsf{Id}_{\mathrm{dom}\left(\upepsilon\right)}\left(x\right)=\upepsilon\!\left(\mathsf{Id}_{\mathrm{dom}\left(\upepsilon\right)}\!\left(x\right)\right)=\upepsilon\!\left(x\right)
\end{gather*}
for all $x\in\mathrm{dom}\!\left(\mathsf{Id}_{\mathrm{dom}\left(\upepsilon\right)}\right)=\mathrm{dom}\!\left(\upepsilon\right)$,
so $\upepsilon\circ\mathsf{Id}_{\mathrm{dom}\left(\upepsilon\right)}=\upepsilon$.
Thus $\upepsilon=\mathsf{Id}_{\mathrm{dom}\left(\upepsilon\right)}$,
so $\upvarphi_{\mathsf{f}}=\mathsf{Id}_{\mathrm{dom}\left(\upvarphi_{\mathsf{f}}\right)}=\mathsf{Id}_{\mathrm{dom}\left(\mathfrak{\mathsf{f}}\circ\upvarphi_{\mathsf{f}}\right)}=\mathsf{Id}_{\mathrm{dom}\left(\mathsf{f}\right)}$. 
\end{proof}
With Theorem \ref{the2} and Corollary \ref{the3-1}  in mind, one
might expect, given Theorem \ref{the4}, that conversely every right
poloid is isomorphic to some domain pretransformation magma (regarded
as a right poloid). Indeed, any poloid can be embedded in a pretransformation
magma by Lemma \ref{lem2}, and it can be shown that $\upmu\!\left(\varepsilon_{x}\right)=\mathsf{Id}_{\mathrm{dom}\left(\upmu\left(x\right)\right)}$,
so any poloid can actually be embedded in a domain pretransformation
magma. Also, the proof of Lemma \ref{lem2} uses almost only properties
of poloids that they share with right poloids. There is one crucial
exception, though: both $\varepsilon_{x}$ and $\varphi_{x}$ are
local right units, but in addition $\varepsilon_{x}$ is a unit while
$\varphi_{x}$ is just a left unit. The fact that $\varepsilon_{x}$
is a unit is used to prove that $x\mapsto\upmu\!\left(x\right)$ is
injective, and this is not true for all right poloids.
\begin{example}
\label{ex3} The magma defined by the Cayley table below is a right
poloid with $x=\varphi_{x}$ and $y=\varphi_{y}$, but $\upmu\!\left(x\right)=\upmu\!\left(y\right)$.

\[
\begin{array}{ccc}
 & x & y\\
x & x & y\\
y & x & y
\end{array}
\]
\end{example}
This suggests that we look for an additional condition on right poloids
to ensure that $x\mapsto\upmu\!\left(x\right)$ is injective. On finding
such a condition, we can prove a weakened converse of Theorem \ref{the4}
by an argument similar to the proof of Lemma \ref{lem2}.

Adapting a definition in \cite{key-5}, we say that a right poloid
such that if $\varphi_{x}\varphi_{y}$ and $\varphi_{y}\varphi_{x}$
are defined then $\varphi_{x}=\varphi_{y}$ is \emph{normal}. (The
poloid in Example \ref{ex3} is not normal.) This notion is the key
to the following three results:
\begin{thm}
\label{the6}A domain pretransformation magma is a normal right poloid.
\end{thm}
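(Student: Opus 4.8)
The plan is to lean on Theorem \ref{the4}, which already establishes that a domain pretransformation magma $\mathscr{R}_{\!X}$ is a right poloid, so that only \emph{normality} remains to be verified. Recall that the proof of Theorem \ref{the4} identifies, for each $\mathsf{f}\in\mathscr{R}_{\!X}$, the unique left unit that is a local right unit for $\mathsf{f}$ as $\upvarphi_{\mathsf{f}}=\mathsf{Id}_{\mathrm{dom}\left(\mathsf{f}\right)}$. Hence the task reduces to showing: if $\upvarphi_{\mathsf{f}}\circ\upvarphi_{\mathsf{g}}$ and $\upvarphi_{\mathsf{g}}\circ\upvarphi_{\mathsf{f}}$ are both defined, then $\upvarphi_{\mathsf{f}}=\upvarphi_{\mathsf{g}}$, i.e.\ $\mathsf{Id}_{\mathrm{dom}\left(\mathsf{f}\right)}=\mathsf{Id}_{\mathrm{dom}\left(\mathsf{g}\right)}$.

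Write $A=\mathrm{dom}\!\left(\mathsf{f}\right)$ and $B=\mathrm{dom}\!\left(\mathsf{g}\right)$. The key observation is that for an identity pretransformation one has $\mathrm{dom}\!\left(\mathsf{Id}_{S}\right)=\mathrm{im}\!\left(\mathsf{Id}_{S}\right)=S$. Thus, by the definition of $\circ$ in a pretransformation magma, $\mathsf{Id}_{A}\circ\mathsf{Id}_{B}$ is defined precisely when $A=\mathrm{dom}\!\left(\mathsf{Id}_{A}\right)\supseteq\mathrm{im}\!\left(\mathsf{Id}_{B}\right)=B$, and symmetrically $\mathsf{Id}_{B}\circ\mathsf{Id}_{A}$ is defined precisely when $B\supseteq A$. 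Assuming both compositions are defined therefore forces $A=B$; since an identity pretransformation is determined by its domain, this gives $\mathsf{Id}_{A}=\mathsf{Id}_{B}$, that is, $\upvarphi_{\mathsf{f}}=\upvarphi_{\mathsf{g}}$, which is exactly normality.

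There is essentially no serious obstacle here: the argument is a one-line set-theoretic computation once the identification $\upvarphi_{\mathsf{f}}=\mathsf{Id}_{\mathrm{dom}\left(\mathsf{f}\right)}$ from the proof of Theorem \ref{the4} is cited rather than re-derived, and once one is careful to use the correct asymmetric condition $\mathrm{dom}\!\left(\mathsf{f}\right)\supseteq\mathrm{im}\!\left(\mathsf{g}\right)$ for $\mathsf{f}\circ\mathsf{g}$ to be defined. It is worth noting in passing why Example \ref{ex3} is not in conflict with this: there the two local right units are distinct left units whose products in both orders are defined, a situation that cannot arise for genuine identity pretransformations, where mutual definedness of the composites collapses the two domains.
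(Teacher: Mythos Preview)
Your proof is correct and follows essentially the same line as the paper's own argument: both invoke the identification $\upvarphi_{\mathsf{f}}=\mathsf{Id}_{\mathrm{dom}\left(\mathsf{f}\right)}$ from the proof of Theorem~\ref{the4}, then observe that $\mathsf{Id}_{A}\circ\mathsf{Id}_{B}$ is defined iff $A\supseteq B$, so mutual definedness forces $A=B$ and hence $\upvarphi_{\mathsf{f}}=\upvarphi_{\mathsf{g}}$. Your additional remark connecting this to Example~\ref{ex3} is a nice touch but not part of the paper's proof.
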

\begin{proof}
In a domain pretransformation magma, $\upvarphi_{\mathsf{f}}=\mathsf{Id}_{\mathrm{dom}\left(\mathsf{f}\right)}$.
Thus, $\upvarphi_{\mathsf{f}}\circ\upvarphi_{\mathsf{g}}$ is defined
if and only if $\mathrm{dom}\!\left(\mathsf{Id}_{\mathrm{dom}\left(\mathsf{f}\right)}\right)\supseteq\mathrm{im}\!\left(\mathsf{Id}_{\mathrm{dom}\left(\mathsf{g}\right)}\right)=\mathrm{dom}\!\left(\mathsf{Id}_{\mathrm{dom}\left(\mathsf{g}\right)}\right)$,
or equivalently $\mathrm{dom}\!\left(\mathsf{f}\right)\supseteq\mathrm{dom}\!\left(\mathsf{g}\right)$,
so if $\upvarphi_{\mathsf{f}}\circ\upvarphi_{\mathsf{g}}$ and $\upvarphi_{\mathsf{g}}\circ\upvarphi_{\mathsf{f}}$
are defined then $\mathrm{dom}\!\left(\mathsf{f}\right)=\mathrm{dom}\!\left(\mathsf{g}\right)$,
so $\mathsf{Id}_{\mathrm{dom}\left(\mathsf{f}\right)}=\mathsf{Id}_{\mathrm{dom}\left(\mathsf{g}\right)}$
or equivalently $\upvarphi_{\mathsf{f}}=\upvarphi_{\mathsf{g}}$.
\end{proof}
\begin{lem}
\label{lem4} In a normal right poloid, the correspondence $x\mapsto\upmu\!\left(x\right)$
is injective.
\end{lem}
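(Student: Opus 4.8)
The plan is to show that $\upmu(x)=\upmu(y)$ forces $x=y$, the key tool being the distinguished local right unit $\varphi_x$, which always lies in $\mathrm{dom}(\upmu(x))$ because $x\varphi_x$ is defined, combined with the normality hypothesis to identify $\varphi_x$ with $\varphi_y$.

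First I would unpack the hypothesis: $\upmu(x)=\upmu(y)$ means (as an equality of prefunctions) that $\mathrm{dom}(\upmu(x))=\mathrm{dom}(\upmu(y))$ and $xt=yt$ for every $t$ in this common set. Since $x\varphi_x$ and $y\varphi_y$ are defined, both $\varphi_x$ and $\varphi_y$ belong to $\mathrm{dom}(\upmu(x))=\mathrm{dom}(\upmu(y))$; hence $x\varphi_y$ and $y\varphi_x$ are both defined.

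Next I would manufacture the two products needed to invoke normality. Using $x\varphi_x=x$, write $x\varphi_y=(x\varphi_x)\varphi_y$; since $(x\varphi_x)\varphi_y$ is defined, Definition \ref{def12} gives that $x(\varphi_x\varphi_y)$ is defined, so in particular $\varphi_x\varphi_y$ is defined. The symmetric computation $y\varphi_x=(y\varphi_y)\varphi_x$ shows that $\varphi_y\varphi_x$ is defined. Normality of $P$ then yields $\varphi_x=\varphi_y$.

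Finally, writing $\varphi:=\varphi_x=\varphi_y$, I would compute $x=x\varphi=\overline{\upmu(x)}(\varphi)=\overline{\upmu(y)}(\varphi)=y\varphi=y$, which is the desired conclusion. The only step demanding care is the appeal to Definition \ref{def12}: one must notice that the right-directed associativity axiom is applied in the form ``$(x\varphi_x)\varphi_y$ defined $\Rightarrow$ $\varphi_x\varphi_y$ defined'' (and its mirror image), rather than attempting to compose $\varphi_x$ with $\varphi_y$ directly; once that observation is made, the rest is routine bookkeeping with domains.
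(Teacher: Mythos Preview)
Your proof is correct and follows essentially the same path as the paper's: both arguments use the equality of $\mathrm{dom}(\upmu(x))$ and $\mathrm{dom}(\upmu(y))$ to place $\varphi_x,\varphi_y$ in the common domain, apply right-directed associativity to $(x\varphi_x)\varphi_y$ and $(y\varphi_y)\varphi_x$ to conclude that $\varphi_x\varphi_y$ and $\varphi_y\varphi_x$ are defined, invoke normality to get $\varphi_x=\varphi_y$, and then evaluate both maps at this common element. The only cosmetic difference is that the paper argues by contraposition (assuming $x\neq y$ and producing a point where $\upmu(x)$ and $\upmu(y)$ disagree), whereas you argue directly.
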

\begin{proof}
Assume that $x\neq y$. If $\mathrm{dom}\!\left(\upmu\!\left(x\right)\right)\neq\mathrm{dom}\!\left(\upmu\!\left(y\right)\right)$
then $\upmu\!\left(x\right)\neq\upmu\!\left(y\right)$ as required.
Otherwise, $\mathrm{dom}\!\left(\upmu\!\left(x\right)\right)=\mathrm{dom}\!\left(\upmu\!\left(y\right)\right)$,
and as $x\varphi_{x}$ and $y\varphi_{y}$ are defined we have $\varphi_{x},\varphi_{y}\in\mathrm{dom}\!\left(\upmu\!\left(x\right)\right)=\mathrm{dom}\!\left(\upmu\!\left(y\right)\right)$.
Thus, $x\varphi_{y}$ is defined, so $\left(x\varphi_{x}\right)\!\varphi_{y}$
is defined, so $x\!\left(\varphi_{x}\varphi_{y}\right)$ is defined,
so $\varphi_{x}\varphi_{y}$ is defined. Similarly, $y\varphi_{x}$
is defined, so $\varphi_{y}\varphi_{x}$ is defined. Therefore, $\varphi_{x}=\varphi_{y}$,
so $\upmu\!\left(x\right)\!\left(\varphi_{x}\right)=x$ and $\upmu\!\left(y\right)\!\left(\varphi_{x}\right)=\upmu\!\left(y\right)\!\left(\varphi_{y}\right)=y$,
so again $\upmu\!\left(x\right)\neq\upmu\!\left(y\right)$.
\end{proof}
Using Lemma \ref{lem4} and proceeding as in the proof of Lemma \ref{lem2},
keeping in mind that $\varphi_{\upmu\left(x\right)}=\upmu\!\left(\varepsilon_{x}\right)=\mathsf{Id}_{\mathrm{dom}\left(\upmu\left(x\right)\right)}$,
we obtain the following result:
\begin{thm}
\label{the7}A normal right poloid can be embedded in a domain pretransformation
magma.
\end{thm}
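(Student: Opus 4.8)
The plan is to imitate the proof of Lemma \ref{lem2}, using the same correspondence $x\mapsto\upmu\!\left(x\right)$, where $\upmu\!\left(x\right)$ is the pretransformation on $P$ with $\mathrm{dom}\!\left(\upmu\!\left(x\right)\right)=\left\{ t\mid xt\;\mathrm{defined}\right\}$ and $\overline{\upmu\!\left(x\right)}\!\left(t\right)=xt$. This is well defined and non-empty for every $x$ in a right poloid $P$, since $x\varphi_{x}$ is defined. I would show that $\upmu$ is an isomorphism of $P$ onto a sub-pretransformation-magma $\upmu\!\left(P\right)$ of $\overline{\mathscr{R}}_{\!P}$, and then that $\upmu\!\left(P\right)$ is in fact a domain pretransformation magma; together these give the embedding.

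First I would establish that $xy$ is defined if and only if $\upmu\!\left(x\right)\circ\upmu\!\left(y\right)$ is defined, and that in that case $\upmu\!\left(xy\right)=\upmu\!\left(x\right)\circ\upmu\!\left(y\right)$. For the forward direction, if $xy$ is defined then, since $P$ is right-directed (Definition \ref{def12}), $yt$ defined implies $\left(xy\right)\!t$ and $x\!\left(yt\right)$ defined, whence $\mathrm{im}\!\left(\upmu\!\left(y\right)\right)\subseteq\mathrm{dom}\!\left(\upmu\!\left(x\right)\right)$, and conversely $\left(xy\right)\!t$ defined forces $x\!\left(yt\right)$, hence $yt$, defined; so $\mathrm{dom}\!\left(\upmu\!\left(xy\right)\right)=\mathrm{dom}\!\left(\upmu\!\left(y\right)\right)=\mathrm{dom}\!\left(\upmu\!\left(x\right)\circ\upmu\!\left(y\right)\right)$, and $\left(xy\right)\!t=x\!\left(yt\right)$ matches the values. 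For the converse, if $\upmu\!\left(x\right)\circ\upmu\!\left(y\right)$ is defined then $\mathrm{im}\!\left(\upmu\!\left(y\right)\right)\subseteq\mathrm{dom}\!\left(\upmu\!\left(x\right)\right)$; since $y\varphi_{y}$ is defined and equals $y$, we get $y=\upmu\!\left(y\right)\!\left(\varphi_{y}\right)\in\mathrm{im}\!\left(\upmu\!\left(y\right)\right)\subseteq\mathrm{dom}\!\left(\upmu\!\left(x\right)\right)$, i.e. $xy$ is defined. This replaces the use of the two-sided unit $\varepsilon_{y}$ in Lemma \ref{lem2} by the local right unit $\varphi_{y}$. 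Combined with the injectivity of $\upmu$ from Lemma \ref{lem4} (which is where \emph{normality} enters), this shows $\upmu$ is a bijective magma homomorphism with homomorphic inverse, hence a right-poloid isomorphism onto the submagma $\upmu\!\left(P\right)$ of $\overline{\mathscr{R}}_{\!P}$; in particular $\upmu\!\left(P\right)$ is closed under $\circ$ and so is a pretransformation magma.

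It remains to check that $\upmu\!\left(P\right)$ is a domain pretransformation magma, i.e. that $\mathsf{Id}_{\mathrm{dom}\left(\mathsf{f}\right)}\in\upmu\!\left(P\right)$ for each $\mathsf{f}\in\upmu\!\left(P\right)$. For this I would prove $\upmu\!\left(\varphi_{x}\right)=\mathsf{Id}_{\mathrm{dom}\left(\upmu\left(x\right)\right)}$. Since $\varphi_{x}$ is a left unit, $\varphi_{x}t=t$ whenever $\varphi_{x}t$ is defined, so it suffices to show $\mathrm{dom}\!\left(\upmu\!\left(\varphi_{x}\right)\right)=\left\{ t\mid\varphi_{x}t\;\mathrm{defined}\right\}$ equals $\mathrm{dom}\!\left(\upmu\!\left(x\right)\right)=\left\{ t\mid xt\;\mathrm{defined}\right\}$: if $xt$ is defined then, as $x=x\varphi_{x}$, $\left(x\varphi_{x}\right)\!t$ is defined, so $x\!\left(\varphi_{x}t\right)$ is defined, so $\varphi_{x}t$ is defined; conversely, if $\varphi_{x}t$ is defined then, since $x\varphi_{x}$ is also defined, right-directedness gives $\left(x\varphi_{x}\right)\!t=xt$ defined. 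Hence each $\mathsf{f}=\upmu\!\left(x\right)\in\upmu\!\left(P\right)$ satisfies $\mathsf{Id}_{\mathrm{dom}\left(\mathsf{f}\right)}=\upmu\!\left(\varphi_{x}\right)\in\upmu\!\left(P\right)$, so $\upmu\!\left(P\right)$ is a domain pretransformation magma and $\upmu$ embeds $P$ into it.

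The main obstacle is precisely the injectivity of $\upmu$: Example \ref{ex3} shows it can fail for a general right poloid, and it is recovered only under normality, via Lemma \ref{lem4}. Everything else in the proof of Lemma \ref{lem2} survives once $\varepsilon_{x}$ is systematically replaced by the local right unit $\varphi_{x}$ — the only place the old argument genuinely exploited a two-sided unit being the injectivity step itself.
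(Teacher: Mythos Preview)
Your proposal is correct and follows exactly the route the paper indicates: the paper's proof is just the single sentence ``Using Lemma \ref{lem4} and proceeding as in the proof of Lemma \ref{lem2}, keeping in mind that $\varphi_{\upmu(x)}=\upmu(\varphi_x)=\mathsf{Id}_{\mathrm{dom}(\upmu(x))}$'', and you have faithfully filled in those details, replacing $\varepsilon_x$ by $\varphi_x$ throughout and invoking normality precisely at the injectivity step via Lemma \ref{lem4}. Your direct argument for the converse implication (observing $y=y\varphi_y\in\mathrm{im}(\upmu(y))\subseteq\mathrm{dom}(\upmu(x))$) is in fact a slight streamlining of the corresponding step in Lemma \ref{lem2}.
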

Theorems \ref{the6} and \ref{the7} correspond to Proposition 2.23
in \cite{key-5}.

Let us look at another way of narrowing down the notion of a right
poloid so that any right poloid considered can be embedded in a domain
pretransformation magma. Consider the relation $\leq$ on a right
poloid $P$ given by $x\leq y$ if and only if $y\varphi_{x}$ is
defined and $x=y\varphi_{x}$. The relation $\leq$ is obviously reflexive,
and if $x\leq y$ and $y\leq z$ then (a) $y\varphi_{x}=\left(z\varphi_{y}\right)\!\varphi_{x}$
is defined so that $z\!\left(\varphi_{y}\varphi_{x}\right)=z\varphi_{x}$
is defined and (b) $x=y\varphi_{x}=\left(z\varphi_{y}\right)\!\varphi_{x}=z\!\left(\varphi_{y}\varphi_{x}\right)=z\varphi_{x}$,
so $\leq$ is transitive as well. Hence, $\leq$ is a preorder, called
the \emph{natural preorder} on $P$, so $\leq$ is a partial order
if and only if it is antisymmetric. \emph{A} right poloid such that
$\epsilon\leq\epsilon'$ and $\epsilon'\leq\epsilon$ implies $\epsilon=\epsilon'$
for any left units $\epsilon,\epsilon'\in P$ is said to be \emph{unit-posetal}.

Recall that for any left unit $\epsilon\in P$ we have $\varphi_{\epsilon}=\epsilon$,
so $\varphi_{\varphi_{x}}=\varphi_{x}$. Thus, $\varphi_{x}\leq\varphi_{y}$
if and only $\varphi_{y}\varphi_{x}$ is defined and $\varphi_{x}=\varphi_{y}\varphi_{x}$,
so as $\varphi_{y}$ is a left unit we have $\varphi_{x}\leq\varphi_{y}$
if and only $\varphi_{y}\varphi_{x}$ is defined. Hence, we obtain
the following results.
\begin{thm}
\label{the8}A right poloid is unit-posetal if and only if it is normal.
\end{thm}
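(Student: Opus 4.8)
The plan is to observe that, once the groundwork laid in the paragraph preceding the statement is in hand, both ``normal'' and ``unit-posetal'' say exactly that the natural preorder $\leq$ is antisymmetric when restricted to the set $E$ of left units of $P$; the theorem then follows by unwinding definitions in both directions. The two ingredients I would use are: (i) every left unit $\epsilon$ satisfies $\varphi_\epsilon = \epsilon$, so that $E$ coincides with $\{\varphi_x \mid x \in P\}$; and (ii) for all $x,y \in P$, $\varphi_x \leq \varphi_y$ if and only if $\varphi_y \varphi_x$ is defined.

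For normal $\Rightarrow$ unit-posetal, I would take left units $\epsilon,\epsilon'$ with $\epsilon \leq \epsilon'$ and $\epsilon' \leq \epsilon$; writing $\epsilon = \varphi_\epsilon$, $\epsilon' = \varphi_{\epsilon'}$ and applying (ii), the first comparability makes $\epsilon'\epsilon = \varphi_{\epsilon'}\varphi_\epsilon$ defined and the second makes $\epsilon\epsilon' = \varphi_\epsilon\varphi_{\epsilon'}$ defined, whence normality gives $\epsilon = \epsilon'$. For unit-posetal $\Rightarrow$ normal, I would suppose $\varphi_x\varphi_y$ and $\varphi_y\varphi_x$ are both defined; since $\varphi_x$ and $\varphi_y$ are left units, (ii) rewrites these as $\varphi_y \leq \varphi_x$ and $\varphi_x \leq \varphi_y$, and unit-posetality applied to the left units $\varphi_x,\varphi_y$ yields $\varphi_x = \varphi_y$.

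I do not anticipate a real obstacle: the argument is essentially bookkeeping on top of equivalence (ii). The one place to be careful is matching the quantifiers --- ``unit-posetal'' ranges over all left units while ``normal'' ranges over all $\varphi_x,\varphi_y$ --- which is legitimate precisely because of (i), i.e.\ because the left units are exactly the elements of the form $\varphi_x$. I would also double-check the orientation in (ii), namely that ``$\varphi_x\varphi_y$ defined'' corresponds to ``$\varphi_y \leq \varphi_x$'', so that the two ``defined'' hypotheses of the normality condition translate into the pair of comparabilities $\varphi_x \leq \varphi_y$ and $\varphi_y \leq \varphi_x$ required to invoke antisymmetry.
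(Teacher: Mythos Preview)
Your proposal is correct and is exactly the argument the paper has in mind: the theorem is stated immediately after the sentence ``Hence, we obtain the following results,'' with no separate proof, because the preceding paragraph has already established your ingredients (i) and (ii), from which both directions are the bookkeeping you describe. Your care about matching the quantifiers via (i) and about the orientation in (ii) is precisely what is needed.
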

\begin{thm}
\label{the9}A domain pretransformation magma is a unit-posetal right
poloid.
\end{thm}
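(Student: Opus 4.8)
The plan is to obtain Theorem \ref{the9} as an immediate consequence of two results already established. First I would invoke Theorem \ref{the6}, which says that a domain pretransformation magma is a \emph{normal} right poloid. Then I would invoke Theorem \ref{the8}, which says that a right poloid is unit-posetal if and only if it is normal. Combining the two gives exactly the assertion: a domain pretransformation magma, being a normal right poloid, is a unit-posetal right poloid. In this approach there is essentially no obstacle; the work has all been done in the preceding theorems.

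If one preferred a self-contained argument, I would instead verify antisymmetry of the natural preorder on the left units of a domain pretransformation magma $\mathscr{R}_{\!X}$ directly. The key preliminary facts are already in the proof of Theorem \ref{the4}: the left units of $\mathscr{R}_{\!X}$ are precisely the identity pretransformations $\mathsf{Id}_{S}$ (with $S = \mathrm{dom}(\mathsf{f})$ for the various $\mathsf{f}\in\mathscr{R}_{\!X}$), and such a pretransformation is determined by its domain. As noted in the excerpt, for left units $\epsilon,\epsilon'$ we have $\epsilon\leq\epsilon'$ if and only if $\epsilon'\epsilon$ is defined; specialising to $\epsilon=\mathsf{Id}_{T}$ and $\epsilon'=\mathsf{Id}_{S}$, the composite $\mathsf{Id}_{S}\circ\mathsf{Id}_{T}$ is defined if and only if $\mathrm{dom}(\mathsf{Id}_{S})\supseteq\mathrm{im}(\mathsf{Id}_{T})$, i.e. $S\supseteq T$. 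Hence $\mathsf{Id}_{T}\leq\mathsf{Id}_{S}$ if and only if $S\supseteq T$.

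From this the conclusion is immediate: if $\mathsf{Id}_{T}\leq\mathsf{Id}_{S}$ and $\mathsf{Id}_{S}\leq\mathsf{Id}_{T}$, then $S\supseteq T$ and $T\supseteq S$, so $S=T$ by antisymmetry of set inclusion, whence $\mathsf{Id}_{S}=\mathsf{Id}_{T}$. Thus the natural preorder restricted to left units is antisymmetric, i.e. $\mathscr{R}_{\!X}$ is unit-posetal. The only mildly delicate point in this direct route is the identification of the left units of a domain pretransformation magma with the identity pretransformations, but that is precisely what was checked in the proof of Theorem \ref{the4}, so it may simply be cited. I would present the short proof using Theorems \ref{the6} and \ref{the8}, since it is the cleanest.
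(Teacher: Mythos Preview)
Your proposal is correct and matches the paper's approach: the paper states Theorem~\ref{the9} without a separate proof, presenting it (together with Theorems~\ref{the8} and~\ref{the10}) as an immediate consequence of the preceding discussion, which amounts precisely to combining Theorem~\ref{the6} with the equivalence in Theorem~\ref{the8}. Your optional direct argument is also fine and essentially reproduces the content of the proof of Theorem~\ref{the6} in the unit-posetal language.
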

\begin{thm}
\label{the10}A unit-posetal right poloid can be embedded in a domain
pretrans\-formation magma.
\end{thm}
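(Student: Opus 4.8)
The plan is to obtain Theorem~\ref{the10} by simply chaining the two results that immediately precede it. By Theorem~\ref{the8}, a right poloid is unit-posetal if and only if it is normal; in particular every unit-posetal right poloid is normal. By Theorem~\ref{the7}, every normal right poloid can be embedded in a domain pretransformation magma. Composing these implications gives the statement at once. So the honest summary is that almost no new work is required here: the substance has already been invested in proving Theorems~\ref{the7} and~\ref{the8}, and Theorem~\ref{the10} is a one-line corollary once those are in place.

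If one wanted a self-contained argument that does not pass through the word ``normal,'' I would instead reprove the embedding directly, in the style of the proof of Lemma~\ref{lem2}. For a unit-posetal right poloid $P$ one takes the assignment $x\mapsto\upmu\!\left(x\right)$, where $\upmu\!\left(x\right)$ is the pretransformation on $P$ with $\mathrm{dom}\!\left(\upmu\!\left(x\right)\right)=\left\{ t\mid xt\;\mathrm{defined}\right\}$ and $\upmu\!\left(x\right)\!\left(t\right)=xt$. Nearly every step of the proof of Lemma~\ref{lem2} uses only properties shared by right poloids and poloids: that $\upmu\!\left(x\right)$ is non-empty because $x\varphi_{x}$ is defined, that $\upmu\!\left(x\right)\circ\upmu\!\left(y\right)$ is defined precisely when $xy$ is, that $\upmu$ is multiplicative, and that $\upmu$ carries left units to identity pretransformations; one also checks in passing that $\upmu\!\left(\varphi_{x}\right)=\mathsf{Id}_{\mathrm{dom}\left(\upmu\left(x\right)\right)}$, so that the image lies in a domain pretransformation magma.

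The single step that genuinely needs the hypothesis, and hence the only place any difficulty arises, is the injectivity of $x\mapsto\upmu\!\left(x\right)$ --- as Example~\ref{ex3} shows, being a right poloid alone is not enough. This is exactly the content of Lemma~\ref{lem4} together with the relevant direction of Theorem~\ref{the8}: if $\mathrm{dom}\!\left(\upmu\!\left(x\right)\right)=\mathrm{dom}\!\left(\upmu\!\left(y\right)\right)$, then $\varphi_{x}$ and $\varphi_{y}$ both lie in this common domain, so $x\varphi_{y}$ and $y\varphi_{x}$ are defined, which forces $\varphi_{x}\varphi_{y}$ and $\varphi_{y}\varphi_{x}$ to be defined; since $\varphi_{x}\leq\varphi_{y}$ iff $\varphi_{y}\varphi_{x}$ is defined and symmetrically, unit-posetality yields $\varphi_{x}=\varphi_{y}$, whence $\upmu\!\left(x\right)\!\left(\varphi_{x}\right)=x\neq y=\upmu\!\left(y\right)\!\left(\varphi_{x}\right)$. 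Thus the only real obstacle, separating elements that share a domain, is dispatched by precisely the antisymmetry-of-the-natural-preorder assumption, and nothing further remains.
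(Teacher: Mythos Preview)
Your proposal is correct and matches the paper's approach exactly: the paper states Theorem~\ref{the10} without a separate proof, presenting it (together with Theorems~\ref{the8} and~\ref{the9}) under the heading ``Hence, we obtain the following results,'' i.e., as an immediate consequence of combining Theorem~\ref{the8} with Theorem~\ref{the7}. Your optional self-contained argument is a faithful unpacking of that chain and introduces nothing the paper would object to.
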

If we specialize the concept of a unit-posetal right poloid by adding
more requirements, the analogue of Theorem \ref{the9} need of course
not hold. In particular, the partial order on the left units is not
necessarily a semilattice.
\begin{example}
Set $X=\left\{ 1,2,3\right\} $ and $\mathscr{R}_{\!X}=\left\{ \mathsf{Id}{}_{\left\{ 1,2\right\} },\mathsf{Id}{}_{\left\{ 2,3\right\} }\right\} $
with $\mathsf{f}\circ\mathsf{g}$ defined as usual when $\mathrm{dom}\left(\mathsf{f}\right)\supseteq\mathrm{im}\left(\mathsf{g}\right)$.
Then $\mathscr{R}_{\!X}$ is a domain pretransformation magma where
$\mathsf{Id}{}_{\left\{ 1,2\right\} }\leq\mathsf{Id}{}_{\left\{ 1,2\right\} }$
and $\mathsf{Id}{}_{\left\{ 2,3\right\} }\leq\mathsf{Id}{}_{\left\{ 2,3\right\} }$,
but this partial order is not a semilattice.
\end{example}
More broadly, let $\boldsymbol{A}$ denote a class of abstract algebraic
structures corresponding to a class $\boldsymbol{C}$ of concrete
magmas of correspondences (functions, prefunctions etc.) in the sense
that any $\boldsymbol{c}$ in $\boldsymbol{C}$ belongs to $\boldsymbol{A}$
when certain operations in $\boldsymbol{C}$ are interpreted as the
operations in $\boldsymbol{A}$. Note that this does not imply that
any $\boldsymbol{a}$ in $\boldsymbol{A}$ can be embedded in some
$\boldsymbol{c}$ in $\boldsymbol{C}$. In particular, if $\boldsymbol{A}$
is a class of generalized groups, with axioms merely defining a generalized
group operation and (optional) generalized identities and inverses,
then the fact that the axioms defining $\boldsymbol{A}$ are satisfied
for any concrete magma $\boldsymbol{c}$ in $\boldsymbol{C}$ does
not provide a strong reason to expect that any $\boldsymbol{a}$ satisfying
these axioms can be embedded in some $\boldsymbol{c}$ in $\boldsymbol{C}$.
As we have just seen, the relation between right poloids and domain
pretransformation magmas is asymmetrical in this respect. (One-sided)
restriction semigroups \cite{key-4} provide another example of this
phenomenon. 
\begin{example}
Let $\boldsymbol{A}$ be the class of semigroups such that for each
$\boldsymbol{a}$ in $\boldsymbol{A}$ and each $x\in\boldsymbol{a}$
there is a unique local right unit for $x$ in $\boldsymbol{a}$.
Let $\boldsymbol{C}$ be the class of semigroups of functional relations
on a given set where the binary operation is composition of relations
and such that for each $\boldsymbol{c}$ in $\boldsymbol{C}$ and
each $\mathtt{f}\in\boldsymbol{c}$ the functional relation $\mathtt{Id}{}_{\mathrm{dom}\left(\mathtt{f}\right)}$
belongs to $\boldsymbol{c}$. Then any $\boldsymbol{c}$ in $\boldsymbol{C}$
belongs to $\boldsymbol{A}$, with $\mathtt{Id}{}_{\mathrm{dom}\left(\mathtt{f}\right)}$
the local right unit for $\mathtt{f}$, but it is not the case that
any $\boldsymbol{a}$ in $\boldsymbol{A}$ can be embedded in some
$\boldsymbol{c}$ in $\boldsymbol{C}$. To ensure embeddability, $\boldsymbol{A}$
needs to be narrowed down by additional conditions, subject to the
restriction that $\boldsymbol{A}$ remains wide enough to accommodate
all $\boldsymbol{c}$ in $\boldsymbol{C}$.
\end{example}
We have seen that any transformation poloid is a poloid and that those
poloids which can be embedded in a transformation poloid are simply
all poloids, and a similar elementary symmetry exists for inverse
semigroups, but such cases are perhaps best regarded as ideal rather
than normal, reflecting the fact that poloids and inverse semigroups
are particularly natural algebraic structures.


\begin{thebibliography}{1}
\bibitem{key-1}Brandt H (1927), \textquotedbl{}Über eine Verallgemeinerung
des Gruppenbegriffes\textquotedbl{}, \emph{Mathematische Annalen},
96(1): 360\textendash 366.

\bibitem{key-2} Freyd P J (1964), \emph{Abelian Categories, an Introduction
to the Theory of Functors}. Harper \& Row.

\bibitem{key-3} Freyd P J, Scedrov A (1999), \emph{Categories, Allegories}.
North-Holland.

\bibitem{key-4}Gould V, Hollings C (2009), ``Restriction semigroups
and inductive constellations'', \emph{Communications in Algebra},
38(1): 261\textendash 287.

\bibitem{key-5}Gould V, Stokes T (2017), ``Constellations and their
relationship with categories'', \emph{Algebra Universalis}, 77(3):
271-304.

\bibitem{key-6}Higgins P J (1971), \emph{Categories and groupoids}.
Van Nostrand-Reinhold. 

\bibitem{key-7}Hollings C D (2007), \emph{Partial Actions of Semigroups
and Monoids}. PhD-thesis, York.

\bibitem{key-8}Ivan G (1996), ``Cayley theorem for monoidoids'',
\emph{Glasnik Matematicki}, 31(51): 73\textendash 82.

\bibitem{key-9}Wagner, V V (1953), ``The theory of generalized heaps
and generalized groups'', \emph{Mat. Sbornik }(\emph{N. S.}) 32(74):
545\textendash 632 (in Russian).
\end{thebibliography}
\end{document}